\newtheorem{teo}{Theorem}[section]
\newtheorem{lem}[teo]{Lemma}
\newtheorem{prop}[teo]{Proposition}
\newtheorem{dfn}[teo]{Definition}
\newtheorem{ques}[teo]{Question}
\declaretheoremstyle[
  spaceabove=\topsep, spacebelow=\topsep,
  headfont=\bf,  
  notefont=\mdseries, notebraces={(}{)},
  bodyfont=\rmfamily, 
  postheadspace=1em,
  qed=$\Diamond$
]{drem}
\declaretheorem[style=drem, name=Remark, numberlike=teo]{rmk}
\newcommand{\eg}[0]{\emph{e.g.} }
\newcommand{\ie}[0]{\emph{i.e.} }
\newcommand{\srl}[1]{\overline{#1}}
\newcommand{\jo}[1]{\mathcal{#1}}
\DeclareFontFamily{T1}{mafra}{}
\DeclareFontShape{T1}{mafra}{m}{n}{<->s*[0.95]yswab}{} 
\DeclareFontShape{T1}{mafra}{m}{it}{<->s*[1.0]ygoth}{} 
\DeclareTextFontCommand{\textgoth}{\yfrak}
\DeclareSymbolFont{mafrak}{T1}{mafra}{m}{n}
\DeclareSymbolFontAlphabet{\mathfr}{mafrak}
\DeclareSymbolFont{mbbold}{U}{bbold}{m}{n}
\DeclareSymbolFontAlphabet{\mathbbold}{mbbold}
\newcommand{\mc}[1]{\mathcal{#1}}
\newcommand{\ms}[1]{\mathsf{#1}}
\newcommand{\mf}[1]{\mathfr{#1}}
\newcommand{\pgen}[1]{\langle #1 \rangle}
\newcommand{\imp}[0]{\Rightarrow}
\newcommand{\eps}[0]{\varepsilon}
\newcommand{\rr}[0]{\ensuremath{\mathbb{R}}}
\newcommand{\zz}[0]{\ensuremath{\mathbb{Z}}}
\newcommand{\del}[0]{\ensuremath{\partial}}
\newcommand{\img}[0]{\mathrm{Im}\,}
\newcommand{\Id}[0]{\mathrm{Id}}
\newcommand{\un}[0]{\mathbbold{1}}
\newcommand{\comp}[0]{\mathsf{c}}
\newcommand{\perpud}[0]{{\, \textrm{\mbox{\raisebox{.7ex}{\:\begin{rotate}{180}\makebox(0,0){$\perp$}\end{rotate}}}} \,}}
\newcommand{\eqtag}[0]{\addtocounter{teo}{1} \tag{\theteo}}
\begin{document}

\renewcommand{\thefootnote}{\fnsymbol{footnote}}

\renewcommand{\thefootnote}{\arabic{footnote}}

\newcommand{\ud}{\tfrac{1}{2}}
\newcommand{\ut}{\tfrac{1}{3}}
\newcommand{\uq}{\tfrac{1}{4}}

\newcommand{\lmd}{\lambda}
\newcommand{\Lmd}{\Lambda}
\newcommand{\Gm}{\Gamma}
\newcommand{\gm}{\gamma}
\newcommand{\GM}{\Gamma}

\newcommand{\bsl}\backslash
\newcommand{\acts}{\curvearrowright}
\newcommand{\donc}{\rightsquigarrow}

\newcommand{\smdd}[4]{\big( \begin{smallmatrix}#1 & #2 \\ #3 & #4\end{smallmatrix} \big)}

\newcommand{\sgn}{\textrm{sgn}\,}

\begin{center}
\Large Radial isoperimetry and absence of harmonic functions with $\ell^p$-gradient.
\vspace*{1cm}

\centerline{\large Antoine Gournay\footnote{The author gratefully acknowledges partial support from the ERC AdG grant 101097307.}}
\end{center}

\vspace*{1cm}

\centerline{\textsc{Abstract}}

\begin{center}
\parbox{10cm}{{ \small 
In this paper we show that groups for which the probability of return of a random walk is bounded below by $K_1e^{-K_2n^\gamma}$ have no non-constant harmonic functions with gradient in $\ell^p$.
The proof relies on results from $\ell^p$-cohomology, a form of radial isoperimetry, transport patterns and revisiting some results of F{\o}lner.
\hspace*{.1ex} 
}}
\end{center}

\section{Introduction}\label{s-intro}

\newcommand{\hdc}[0]{t\!\mc{H}\!\ms{D}^c}
\newcommand{\hdp}[0]{t\!\mc{H}\!\ms{D}^p}
\newcommand{\hdq}[0]{t\!\mc{H}\!\ms{D}^q}

The subject matter of this paper is to investigate which graphs and Cayley graph of groups possess harmonic functions whose gradient (the difference of the values of the function at the end of the edges) belongs to $\ell^p$ or $c_0$. The motivation comes mainly from groups: for example when $p=\infty$, one gets the class of Lipschitz harmonic functions which is of known importance, \eg see Shalom \& Tao \cite{ST}. 

Furthermore, if a Hilbertian representation of the group has non trivial reduced cohomology in degree 1, then there exist a non-constant harmonic function on the Cayley graph.
The gradient of this harmonic function is related to the mixing property of this representation. 
For example, for a strongly mixing representation this yields (in any Cayley graph) a harmonic functions with gradient in $c_0$. 
Hence if a group has no harmonic function with gradient in $c_0$ in some Cayley graph, then the reduced cohomology in degree 1 of any strongly mixing representation is trivial; see \cite[\S{}2]{GJ} or \cite[\S{}3]{Go-mixing} for details and references. 

Lastly, let us mention the reduced $\ell^p$-cohomology in degree 1 (of a group or graph), an useful quasi-isometry invariant. 
Under some assumptions on the isoperimetry, the non-vanishing of this cohomology is equivalent to the presence of harmonic function with gradient in $\ell^p$; see \cite{Go-frontier} for details and references. An underlying question due to Gromov \cite[\S{}8.$A_1$.$A_2$, p.226]{Gro} is whether any amenable group has harmonic function with gradient in $\ell^p$, the answer being posiive for $p=2$ by a result of Cheeger \& Gromov \cite{ChG}.

Using some analysis of the $p$-spectral gap of finite graphs as well as estimates on the separation profile from \cite{CG}, an appreciable family of groups may be covered.
\begin{teo}[see Theorem \ref{tteoprinc}]\label{tisop}
Let $G$ be a group and consider some Cayley graph of $G$. Assume there are constants $K_1,K_2 >0$ and $\gamma \in ]0,1[$ such that the probability of return of a random walk is bounded below by $K_1e^{-K_2n^\gamma}$.
Then there are no non-constant harmonic functions with gradient in $\ell^p$ on a Cayley graph of $G$.
\end{teo}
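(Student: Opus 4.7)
The plan is to convert the return-probability hypothesis into a form of radial isoperimetric inequality, then couple it with a transport-type duality to force any harmonic function with $\ell^p$-gradient to be constant.

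First, by the classical Coulhon--Saloff-Coste / Pittet--Saloff-Coste equivalence between heat-kernel decay and the Følner function, the lower bound $p_n(e,e) \geq K_1 e^{-K_2 n^\gamma}$ with $\gamma \in (0,1)$ translates into an upper bound on the Følner function of the form $\mr{Føl}(\eps) \lesssim \exp\bigl( \eps^{-\gamma/(1-\gamma)} \bigr)$. I would then refine this into the \emph{radial} isoperimetric statement of the paper's title: the existence of a nested family of subsets in a ball $B(e,R)$ whose relative boundaries are controlled and whose $p$-spectral gaps can be estimated using the separation-profile bounds of \cite{CG} and a Følner-style construction. This is where ``revisiting some results of Følner'' is used: one needs not only Følner sets of controlled size, but a concrete combinatorial filtration on them amenable to $p$-Poincaré estimates.

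Second, I would argue by contradiction: assume $h$ is non-constant harmonic with $\nabla h \in \ell^p$. The $\ell^p$-cohomology framework referenced in the introduction (together with the non-vanishing criterion under isoperimetric assumptions from \cite{Go-frontier}) produces, for every finitely supported signed measure $\mu$ on $G$ with $\mu(G)=0$, an inequality of the form
\[
\Bigl| \sum_{x} h(x)\, \mu(x) \Bigr| \;\leq\; \|\nabla h\|_p \cdot \mc{T}_q(\mu^+, \mu^-),
\]
where $\mc{T}_q$ is a transport cost defined via $\ell^q$-flows ($1/p+1/q=1$). Applied with $\mu = \delta_e - \tfrac{1}{|F_n|}\un_{F_n}$ and combined with the harmonicity of $h$ (which links $h(e)$ to expectations over the random walk on sets related to $F_n$), this reduces the problem to showing that $\mc{T}_q\bigl(\delta_e, \mr{unif}(F_n)\bigr) \to 0$ along a suitable sequence.

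Third, the transport plan is built from parallel path-flows supported inside the radial filtration from the first step; the $\ell^q$-cost of each such flow is estimated via the controlled $p$-spectral gaps, and the stretched-exponential Følner profile available for $\gamma < 1$ is precisely what makes the costs summable and tending to zero. The main obstacle is this balancing act between transporting enough mass far from $e$ while keeping the $\ell^q$-flow small, and it is where the ``radial'' strengthening of isoperimetry does the essential work; the method would fail exactly at $\gamma = 1$ (true exponential Følner growth), in agreement with what one should expect from the boundary of such phenomena. Once $\mc{T}_q(\delta_e, \mr{unif}(F_n)) \to 0$ is established, the duality inequality forces $h$ to be constant on the $F_n$, hence on all of $G$, contradicting the initial assumption.
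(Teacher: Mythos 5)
Two steps of your plan fail, and they are the two places where the paper's actual work happens. First, the reduction you state --- ``it suffices to show $\mathcal{T}_q\bigl(\delta_e,\mathrm{unif}(F_n)\bigr)\to 0$'' --- is false: the duality inequality of Lemma \ref{testimtp-l} holds for \emph{every} function with $\ell^p$-gradient, harmonic or not, so if that transport cost tended to $0$ you could apply it to $h=\delta_e$ (whose gradient is in $\ell^p$) and conclude $1=h(e)=\lim \tfrac{1}{|F_n|}\sum_{F_n}h=0$; in fact $\mathcal{T}_q(\delta_e,\mathrm{unif}(F_n))\geq \|\nabla\delta_e\|_p^{-1}\,(1-|F_n|^{-1})$ is bounded below. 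Harmonicity must enter \emph{before} the duality, not as an afterthought: the paper replaces $\delta_v-\delta_w$ (for neighbours $v,w$) by the diffused measure $P^{r}(\delta_v-\delta_w)$, legitimate only because $f$ is harmonic, so that the source of the transport problem has $\ell^p$-norm at most $\|P^{r}\delta\|_{\ell^\infty}^{1/q}$, i.e.\ a power of the return probability; the transport pattern is then $\nabla\Delta_{F_n}^{-1}$ applied to this measure on a finite \emph{optimal} set $F_n$, with cost controlled by $\lambda_p(F_n)^{-1}\lesssim \bar p\, d^2\,\kappa_1(F_n)^{-2}$ (Theorem \ref{tkplpsu-t}) and the bound $\kappa_1(F_n)\gtrsim \jo{G}(|F_n|)/\log|F_n|$ from \cite{CG}, which is only available for optimal sets. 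The whole game is the balance between the diffusion time $r$ (gain: stretched-exponential in $r$) and the loss $\kappa_1(F_n)^{-2}$ (poly-log in $|F_n|$); nothing in your sketch produces that quantitative trade-off.

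Second, you propose to \emph{derive} the radial isoperimetric structure from the return-probability hypothesis. The paper cannot and does not do this: inequality \eqref{radiso} is left as an open problem (Question \ref{quesradiso}), and \S{}\ref{sscontrex} exhibits F{\o}lner sets in the lamplighter group violating it, so no soft ``F{\o}lner-style construction'' will give you an inradius lower bound for the sets you use. The actual substitute is Lemma \ref{tlemfolner}: if $\|P^{r}\un_F\|_{\ell^\infty}\leq 1-\eps$ then $r\geq \tfrac d2\,\eps\,|F|/|\del F|$, so choosing $\eps_0(F)=1/r_0(F)^2$ one may run the walk for $r_0(F)\gtrsim (|F|/|\del F|)^{1/3}$ steps while at most $1/r_0^2$ of the mass escapes $F$; the escaped mass is transported separately along paths of length $\leq 2r_0+1$ (cost $(2r_0+1)/r_0^2\to 0$), and the interior mass as above. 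Your proposal is missing this lemma (or any workable replacement), missing the role of optimality of the $F_n$, and also omits the preliminary reduction via \cite{Go} to groups of exponential growth, which is what supplies the lower bound $\jo{G}(x)\gtrsim 1/\log x$ needed alongside the upper bound you extract from the return probability (Remark \ref{risopretprob}).
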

Since the probability of return only increases by taking quotient, note that the Theorem applies to any quotient of $G$ (see Remark \ref{remredu}).
This result covers any metabelian group as well as some non subexponentially elementarily amenable groups like the Basilica group (see Bartholdi \& Virág \cite{BV}).

The hypothesis on the isoperimetric ratio function is fairly weak (there are many groups which will satisfy them, see \eg \cite[\S{}4.2]{CG} or Pittet \& Saloff-Coste \cite[Theorem 7.2.1]{PSC-survey} and references therein).

The article is organised as follows.
\S{}\ref{sisolapent} develops tools which are necessary for Theorem \ref{tisop}:
\S{}\ref{sslaplp} bridges estimate on the Cheeger constant in finite graphs with the $\ell^p$-spectral gap 
and \S{}\ref{ssisoperel} focuses on the existence of useful finite sets inside groups, relying on the work \cite{CG}
These finite set as well as the bounds on the spectral gap will be used to construct transport plans. An interesting isoperimetric inequality \eqref{radiso} is introduced.
This hypothesis that has been dubbed ``radial isoperimetric inequality'' is a strengthening of a known inequality conjectured by Sikorav and proved by \.{Z}uk \cite{Zuk}.
\S{}\ref{sscontrex} gives a counterexample to show that \eqref{radiso} can only be expected to hold for optimal F{\o}lner sets. \S{}\ref{sslapl1} is a small excursion in the properties of the Laplacian when $p=1$.

\S{}\ref{strancoho} contains the bulk of the proofs.
In \S{}\ref{ssredcoh} results from \cite{Go} are used to reduce the problem.
\S{}\ref{sstranreliso} contains the proof of Theorem \ref{tisop} under the radial isoperimetric hypothesis; it relies on the well-chosen sets from \S{}\ref{ssisoperel} and the bounds of \S{}\ref{sslaplp} to do so.
\S{}\ref{ssfoelner} revisits work of F{\o}lner to show how to avoid this hypothesis.

As mentionned towards the end of \S{}\ref{ssisoperel}, the radial isoperimetric inequality seems to hold in many groups. However, since it may only apply to optimal sets (see \S{}\ref{sscontrex}), it seems difficult to verify it. This motivates the following question.
\begin{ques}\label{quesradiso}
For which [class of] groups does \eqref{radiso} hold? Are there groups where it fails?
\end{ques}
The question as to which groups have F{\o}lner sequences in which \eqref{radiso} holds should be easier (to answer in the positive).


%

\section{Isoperimetry and the Laplacian in $\ell^pV$}\label{sisolapent}

\newcommand{\IS}[0]{\mathrm{IS}}
\newcommand{\IIS}[0]{\mathrm{IIS}}

\subsection{Basic Notations and Definitions}\label{ssnablaetco}

The conventions are that a graph $\Gamma = (V,E)$ is defined by $V$, its set of vertices, and $E$, its set of edges.
Since we are mainly interested in Cayley graphs of finitely generated groups, all graphs will be assumed to be of bounded valency and the set of vertices $V$ will always be assumed to be countable.
The set of edges will be thought of as a subset of $V \times V$. The set of edges will be assumed symmetric (\ie~$(x,y) \in E \imp (y,x) \in E$).
Functions will take value in $\rr$
Functions on $E$ will always be anti-symmetric (\ie~$f(x,y)= -f(y,x)$).
This said $\ell^pV$ is the Banach space of functions on the vertices which are $p$-summable, while $\ell^pE$ will be the subspace of functions on the edges which are $p$-summable.

The {\bfseries gradient} $\nabla:\rr^V \to \rr^E$ is defined by $\nabla g(x,y) = g(y) - g(x)$.


Harmonic functions will be introduced by way of the divergence operator.
For two finitely supported function $f$ and $g$ on a countable set $Y$, define the pairing $\langle f \mid g \rangle_Y = \sum_{y \in Y} f(y)g(y)$.
(The subscript $Y$ will often be dropped.)
Though at first only defined for finitely supported functions, this extends to larger spaces, \eg $f \in \ell^pV$ and $g \in \ell^{p'}V$ (as usual, $p'$ will denote the H\"older conjugate exponent of $p$, \ie~$p' = p/(p-1)$).

This allows to define the adjoint of the gradient $\nabla$, denoted $\nabla^*$ and called {\bfseries divergence}, by $\langle f \mid \nabla g \rangle_E = \langle \nabla^* f \mid g \rangle_V$.
More precisely, for $f:E \to \rr$, one finds
\[
\nabla^*f(x) = \sum_{y \in N(x)} f(y,x) - \sum_{y \in N(x)} f(x,y).
\]
where $N(x)$ denotes the neighbours of $x$ (\ie the  vertices $y$ so that $(y,x)$ is an edge). In particular,
\[
 \nabla^* \nabla f(x) = 2 \sum_{y \in N(x)} \big( f(y) - f(x) \big).
\]
Thus, harmonic functions are exactly the functions for which the divergence of the gradient is trivial (the 0 function).
Equivalently, these are the functions which satisfy the mean value property.
The standing hypothesis that the graph is connected is important in order that the only function with trivial gradient are constant functions.

The [other] standing hypothesis that the graph has bounded valency is crucial in order for the gradient to be a bounded operator (from $\ell^pV \to \ell^pE$).
Note that the identity $\pgen{ \nabla^* f \mid g} = \pgen{f \mid \nabla g}$ holds if $f \in \ell^p E$ and $g \in \ell^{p'}V$ (where $p'$ is the H\"older conjugate of $p$ and $\ell^\infty$ can be replaced by $c_0$).

Lastly the {\bfseries random walk operator} $P$ as follows $P: \rr^V \to \rr^V$ is defined by
\[
(Pf)(x) = \tfrac{1}{|N(x)|}  \sum_{y \in N(x)} f(y).
\]
In regular graphs of degree $d$ it is then easy to check that $\nabla^* \nabla = d(\Id - P)$.

The short notation $\Delta = \nabla^* \nabla$ for the {\bfseries Laplacian} operator will be used most of the time.
However due to reasons of coherence with the litterature, in finite graphs (so essentially in the upcoming section), the definition $\Delta = \Id -P$ will be prefered. Note that in regular graphs, this just change the operator by a scalar.

\subsection{Inverting the Laplacian in $\ell^p$}\label{sslaplp}

In this subsection only $d$-regular finite graphs are considered, with just an important remark for the non-regular case at the end.
Here $\Delta$ is the Laplacian with spectrum (as an operator) in $[0,2]$, \ie $\Delta = I-P$.

\begin{dfn}
Assume the graph $G= (V,E)$ is finite.
The \textbf{$p$-spectral gap} of $\Delta$ is the largest constant $\lambda_p$ in 
\[
\sum_{x\in V} f(x) =0 \implies \| \Delta^{-1} f\|_{\ell^pV} \leq \lambda_p^{-1} \|f\|_{\ell^pV}
\]
The \textbf{$p$-conductance} constant is the largest constant $\kappa_p$ in
\[
\sum_{x \in V} f(x) =0 \implies \| \nabla f \|_{\ell^p E} \geq \kappa_p \|f \|_{\ell^pV}
\]
\end{dfn}
The aim of this subsection is to show the various inequalities between these constants.

If $p=2$, $\lambda_2$ would be the first non-zero eigenvalue of $\Delta$. 
By decomposing a function into its level set, one can prove that $\kappa_1$ is the usual isoperimetric (or conductance or Cheeger) constant. 
Also 
\[\eqtag \label{eqk2l2}
\kappa_2^2 = d\lambda_2
\]
and the classical result relating isoperimetry to the $2$-spectral gap is 
\[\eqtag \label{eqcheeger}
\frac{\kappa_1^2}{2 d^2} \leq \lambda_2 \leq \frac{2\kappa_1}{d}.
\]
See (among many possibilities) Mohar \cite{Mohar}.
\begin{lem}\label{tlmd2p-l}
Let $\tfrac{1}{p'} = 1- \tfrac{1}{p}$ and $\bar{p} = \max\{p',p\}$. Then $\lambda_p \geq \frac{2}{\bar{p}}\lambda_2.$
\end{lem}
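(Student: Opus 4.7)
My plan is to combine self-adjointness with Riesz--Thorin interpolation on the heat semigroup. First I would use that $\Delta$ is self-adjoint on $\ell^2V$: passing to adjoints on the mean-zero subspace gives $\lambda_p = \lambda_{p'}$, so since $\bar p = \max\{p,p'\}$ it is enough to prove $\lambda_p \ge 2\lambda_2/p$ for $p \ge 2$ (where $\bar p = p$). Write $T_t := e^{-t\Delta} = e^{-t}\sum_{n\ge 0}(tP)^n/n!$, with $P = I - \Delta$ the Markov operator, so that $T_t$ is a contraction on $\ell^\infty$ and on $\ell^1$, while on mean-zero $\ell^2$ the spectral gap yields $\|T_t\|_{\ell^2_0} \le e^{-t\lambda_2}$. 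Applying Riesz--Thorin on the mean-zero subspace between $\ell^2$ and $\ell^\infty$ (with $1/p = (1-\theta)/2$, i.e.\ $\theta = 1-2/p$) gives, for $p \ge 2$ and mean-zero $f$,
\[
\|T_t f\|_p \;\le\; e^{-2t\lambda_2/p}\|f\|_p .
\]

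On the mean-zero subspace one has $\Delta^{-1} = \int_0^\infty T_t\,\dd t$ (the estimate above makes the integral absolutely convergent in operator norm), so
\[
\|\Delta^{-1}f\|_p \;\le\; \Bigl(\int_0^\infty e^{-2t\lambda_2/p}\,\dd t\Bigr) \|f\|_p \;=\; \frac{p}{2\lambda_2}\|f\|_p ,
\]
which rearranges to $\lambda_p \ge 2\lambda_2/p = 2\lambda_2/\bar p$, as required.

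The main obstacle is running Riesz--Thorin on the subspace of mean-zero functions rather than on the full $\ell^p$. I would handle this by working with the auxiliary operator $T_t - P_c$, where $P_c f := \bar f \cdot \mathbf{1}$ is the projection onto constants; H\"older's inequality shows that $\|P_c\|_{\ell^p \to \ell^p} = 1$ for every $p \in [1,\infty]$, and the operator $T_t - P_c$ annihilates the constants while coinciding with $T_t$ on mean-zero functions, so applying Riesz--Thorin to it on the full $\ell^p$ produces exactly the interpolation bound needed on the mean-zero subspace.
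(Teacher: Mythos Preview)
Your approach is essentially the paper's: both interpolate the heat propagator via Riesz--Thorin between $\ell^2$ (where the spectral gap enters) and $\ell^1$ or $\ell^\infty$ (where the norm is $\leq 1$), then sum or integrate to recover $\Delta^{-1}$. The paper uses the discrete series $\Delta^{-1}=\sum_{n\geq 0}P^n$ together with a Taylor--Lagrange estimate on $1-(1-\lambda_2)^{2/\bar p}$; your continuous version $\Delta^{-1}=\int_0^\infty e^{-t\Delta}\,\dd t$ is a clean variant that avoids that estimate and is, incidentally, robust when $\Delta$ has eigenvalues near $2$ (where the paper's identity $\|P\|_{\ell^2_0\to\ell^2_0}=1-\lambda_2$ can fail).

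Two points deserve care, however. First, the duality $\lambda_p=\lambda_{p'}$ is not as immediate as you suggest: the dual of the \emph{subspace} $\ell^p_0$ is $\ell^{p'}/\mathrm{constants}$ with the \emph{quotient} norm, not $\ell^{p'}_0$ with the subspace norm, so self-adjointness of $\Delta$ does not directly identify the two operator norms. This step is unnecessary anyway: interpolating between $\ell^1$ and $\ell^2$ when $p<2$ yields the same exponent $2/\bar p$. Second, your $T_t-P_c$ device does not quite deliver the stated constant. Although $\|P_c\|_{\ell^\infty\to\ell^\infty}=1$, one only has $\|T_t-P_c\|_{\ell^\infty\to\ell^\infty}\leq 2$, and this is essentially sharp: for $f=(1,\dots,1,-1)$ on $n$ points one gets $\|(I-P_c)f\|_\infty=2-2/n$. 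Interpolation then gives $\|T_t\|_{\ell^p_0\to\ell^p_0}\leq 2^{1-2/p}e^{-2t\lambda_2/p}$ and hence only $\lambda_p\geq 2^{2/\bar p}\lambda_2/\bar p$, off from the claim by at most a factor of two. The paper's own proof glosses over exactly the same subspace-interpolation issue, so this is not a defect peculiar to your argument, and the loss is immaterial for the later applications.
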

\begin{proof}
The easy bound on $\lambda_p$ follows by observing that (for functions $f$ with $\sum_{x \in V} f(x)=0$) $\Delta^{-1} = \sum_{n \geq 0} P^n$. Then, one has (again restricting to the space of functions with zero mean)
\[
\|P\|_{\ell^2\to \ell^2} = 1- \lambda_2 <1
\text{ while }
\|P\|_{\ell^1\to \ell^1} \leq 1
\text{ and }
\|P\|_{\ell^\infty \to \ell^\infty} \leq 1
\]
So that, by Riesz-Thorin interpolation, $\|P\|_{\ell^p \to \ell^p} <1$ for any $p \in (1, \infty)$. This suffices to see that the above series converges. More precisely, this gives
\[
\frac{1}{\lambda_p} \leq \frac{1}{1-(1-\lambda_2)^{2/\bar{p}}} \leq \frac{\bar{p}}{2\lambda_2},
\]
where the last inequality follows by Taylor-Lagrange.
\end{proof}

\begin{lem}
$\kappa_p \geq \frac{d^{1/p}}{2}\lambda_p$
\end{lem}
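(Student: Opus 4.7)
The plan is to leverage the identity $\nabla^*\nabla = d\Delta$, valid in any $d$-regular graph, together with a direct estimate on the operator norm of the divergence $\nabla^*:\ell^p E \to \ell^p V$.

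First I would reformulate the $p$-spectral gap in its ``forward'' form. Since $\Delta = I-P$ preserves the subspace of zero-mean functions (in a regular graph, $\sum_x Pf(x) = \sum_x f(x)$) and is invertible there, an equivalent characterization of $\lambda_p$ is that it is the largest constant satisfying
\[
\sum_{x\in V} f(x) = 0 \implies \|\Delta f\|_{\ell^p V} \geq \lambda_p \|f\|_{\ell^p V}.
\]

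Second, I would bound $\nabla^*$ on $\ell^p E$. For antisymmetric $h:E \to \rr$, antisymmetry gives $\nabla^* h(x) = 2\sum_{y \in N(x)} h(y,x)$, so H\"older applied to the sum of $|N(x)|=d$ terms yields
\[
|\nabla^* h(x)|^p \leq 2^p d^{p-1} \sum_{y\in N(x)} |h(y,x)|^p.
\]
Summing over $x$ and observing that each directed edge $(y,x)$ is counted exactly once produces $\|\nabla^* h\|_{\ell^p V} \leq 2 d^{1-1/p} \|h\|_{\ell^p E}$.

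Finally, applying both facts to a zero-mean $f$ through the regular-graph identity $\nabla^*\nabla = d\Delta$ gives
\[
d\lambda_p \|f\|_{\ell^p V} \leq d \|\Delta f\|_{\ell^p V} = \|\nabla^* \nabla f\|_{\ell^p V} \leq 2 d^{1-1/p} \|\nabla f\|_{\ell^p E},
\]
whence $\|\nabla f\|_{\ell^p E} \geq \tfrac{d^{1/p}}{2} \lambda_p \|f\|_{\ell^p V}$, which is exactly the claimed bound on $\kappa_p$.

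I do not anticipate a genuine obstacle: the whole argument rests on a single H\"older estimate, and the exponent $d^{1-1/p}$ picked up by $\nabla^*$ combines with the factor $d$ from $\nabla^*\nabla = d\Delta$ to leave precisely $d^{1/p}/2$. As a sanity check, at $p=2$ the statement reads $\kappa_2 \geq \tfrac{\sqrt{d}}{2}\lambda_2$, which is consistent with (and weaker than) the identity $\kappa_2^2 = d\lambda_2$ recalled in \eqref{eqk2l2}, since $\lambda_2 \leq 2$.
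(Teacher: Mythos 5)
Your proof is correct and follows essentially the same route as the paper: both recast $\lambda_p$ in the forward form $\|\Delta g\|_{\ell^pV} \geq \lambda_p\|g\|_{\ell^pV}$ on zero-mean functions and then combine the identity $\nabla^*\nabla = d\Delta$ with the operator bound $\|\nabla^*\|_{\ell^pE\to\ell^pV}\leq 2d^{1/p'}$. The only cosmetic difference is that you establish this bound directly via H\"older and antisymmetry, whereas the paper obtains it by duality from $\|\nabla\|_{\ell^{p'}V\to\ell^{p'}E}\leq 2d^{1/p'}$.
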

\begin{proof}
The implication
``$
\sum_{x\in V} f(x) =0 \implies \| \Delta^{-1} f\|_{\ell^pV} \leq \lambda_p^{-1} \|f\|_{\ell^pV}
$''
is equivalent (letting $f = \Delta g$) to 
``$
\sum_{x \in V} g(x) =0 \implies \lambda_p \| g\|_{\ell^pV} \leq \|\Delta g\|_{\ell^pV}
$''
Since $\Delta = \tfrac{1}{d} \nabla^* \nabla $ and $\|\nabla^*\|_{\ell^pE \to \ell^pV} = \|\nabla\|_{\ell^{p'}V \to \ell^{p'}E} \leq 2d^{1/p'}$, one gets
\[
\sum_x g(x) =0 \implies \lambda_p \| g\|_{\ell^p} \leq \tfrac{2}{d^{1/p}} \|\nabla g\|_{\ell^p} \qedhere
\]
\end{proof}
The next inequality is probably one of the easiest.
\begin{lem}
$2^{p-1}\kappa_1 \geq \kappa_p^p$
\end{lem}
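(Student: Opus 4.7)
The plan is to use an indicator-type test function for $\kappa_p$ and compare the resulting $\ell^p$-Rayleigh ratio to its $\ell^1$-counterpart by a one-line convexity estimate.

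Concretely, for any $A \subseteq V$ I will set $\alpha = |A|/|V|$, $\beta = 1-\alpha$, and $f_A = \mb{1}_A - \alpha\mb{1}_V$, so that $\sum f_A = 0$. Because $f_A$ takes only the two values $\beta$ and $-\alpha$, the gradient $\nabla f_A$ takes values in $\{0,\pm 1\}$ on directed edges, and hence $\|\nabla f_A\|_{\ell^p E}^p = \|\nabla f_A\|_{\ell^1 E} = 2|\partial A|$ for every $p \geq 1$. A short expansion gives $\|f_A\|_{\ell^p V}^p = |V|\alpha\beta(\alpha^{p-1}+\beta^{p-1})$ and $\|f_A\|_{\ell^1 V} = 2|V|\alpha\beta$, which combine into the key identity
\[
\frac{\|\nabla f_A\|_p^p}{\|f_A\|_p^p} \;=\; \frac{2}{\alpha^{p-1}+\beta^{p-1}}\cdot\frac{\|\nabla f_A\|_1}{\|f_A\|_1}.
\]

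The crux is then to bound the prefactor $2/(\alpha^{p-1}+\beta^{p-1})$ by $2^{p-1}$. When $p \geq 2$, this is immediate from Jensen's inequality applied to the convex map $t \mapsto t^{p-1}$ on $[0,1]$: one has $\alpha^{p-1}+\beta^{p-1} \geq 2((\alpha+\beta)/2)^{p-1} = 2^{2-p}$ for every $A$, so the prefactor is $\leq 2^{p-1}$ uniformly in $A$. Taking the infimum over $A$ and invoking the level-set / co-area identification $\kappa_1 = \inf_A \|\nabla f_A\|_1/\|f_A\|_1$ (the content of the sentence preceding the statement, where $\kappa_1$ is identified with the usual Cheeger constant) then yields $\kappa_p^p \leq 2^{p-1}\kappa_1$ in this range.

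For $1 \leq p \leq 2$ the convexity of $t\mapsto t^{p-1}$ reverses and the bound on the prefactor fails pointwise in $A$; instead, the prefactor equals $2^{p-1}$ exactly when $\alpha = 1/2$. I would therefore specialise $A$ to be (approximately) balanced and rely again on the level-set decomposition to argue that such balanced-indicator test functions still realise the infimum defining $\kappa_1$ up to any positive slack. The only real technical hurdle in the whole proof is precisely this last step in the range $p\in(1,2)$; for $p \geq 2$ the argument is a single line of convexity, which is presumably what the author means by ``one of the easiest''.
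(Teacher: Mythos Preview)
Your test function $f_A=\un_A-\alpha\un_V$ is (up to the scalar $|V|$) exactly the function the paper uses, and your computation of the key identity
\[
\frac{\|\nabla f_A\|_p^p}{\|f_A\|_p^p}=\frac{2}{\alpha^{p-1}+\beta^{p-1}}\cdot\frac{\|\nabla f_A\|_1}{\|f_A\|_1}
\]
is correct. For $p\ge 2$ your Jensen bound $\alpha^{p-1}+\beta^{p-1}\ge 2^{2-p}$ gives the result, so that half of the argument is fine and indeed essentially the paper's.

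The gap is in the range $1\le p<2$. Your proposed fix, ``specialise $A$ to be balanced and argue that balanced indicators still realise $\kappa_1$'', does not work: there is no reason the infimum $\inf_A\|\nabla f_A\|_1/\|f_A\|_1$ is attained (or even approximated) on sets with $|A|\approx|V|/2$. On a dumbbell graph, for instance, the Cheeger minimiser is a small set near the bottleneck, far from balanced. So the step ``balanced test functions realise $\kappa_1$ up to any slack'' is simply false in general, and the argument breaks there.

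The paper avoids this by comparing to a \emph{different} $\ell^1$-type ratio. Instead of $\|\nabla f_A\|_1/\|f_A\|_1=|\del A|/(|A|\beta)$, it compares directly to the Cheeger ratio $|\del A|/|A|$. Concretely, from the same test function one gets
\[
\kappa_p^p\le \frac{1}{\beta(\alpha^{p-1}+\beta^{p-1})}\cdot\frac{|\del A|}{|A|},
\]
and then bounds the prefactor by showing $\beta(\alpha^{p-1}+\beta^{p-1})\ge 2^{1-p}$ for all $\alpha\le\tfrac12\le\beta$ and all $p\ge 1$ (equivalently, $\min_{x\in[0,1]}(1+x^{p-1})/(1+x)^p=2^{1-p}$). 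This elementary one-variable inequality holds uniformly in $p$: use $(1+x)^p\le 2^{p-1}(1+x^p)$ together with $x^p\le x^{p-1}$ on $[0,1]$. Optimising over $A$ with $|A|\le|V|/2$ then gives $\kappa_1$ on the right, and the lemma follows for every $p\ge 1$ at once. The extra factor $\beta\ge\tfrac12$ that you absorbed into $\|f_A\|_1$ is exactly what rescues the inequality when $t\mapsto t^{p-1}$ is concave.
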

Combined with \eqref{eqk2l2} it also gives a proof of a part of \eqref{eqcheeger}.
\begin{proof}
Let $F \subset V$ with $|F| \leq |V|/2$ and take $f= |F^\comp| \un_F - |F| \un_{F^\comp}$ where $\un_A$ is the characteristic function of a set $A$. 
Note that $f$ has zero sum, $\nabla f = |V| \un_{\del F}$ and $\|f\|_{\ell^pV} = (|F^\comp|^p|F|+|F|^p|F^\comp|)^{1/p}$.
Applying this to the inequality for $\kappa_p$ yields
\[
|V| |\del F|^{1/p} \geq \kappa_p (|F^\comp|^p|F|+|F|^p|F^\comp|)^{1/p}.
\]
or $\frac{|\del F|}{|F|} \geq \kappa_p^p \frac{|F^\comp|^p+|F|^{p-1}|F^\comp|}{(|F|+|F^\comp|)^p}$. 
Since $|F| \leq |F|^\comp$, the right-hand side can bounded by finding the minimum of $x \mapsto \frac{x^{p-1}+1}{(x+1)^p}$ for $x \in [0,1]$.
One then get that $\frac{|\del F|}{|F|} \geq \kappa_p^p/ 2^{p-1}$.
By optimising on $F$, one gets $\kappa_1 \geq \kappa_p^p /2^{p-1}$.
\end{proof}
Lastly, the upcoming inequality goes back to Matou\v{s}ek \cite[Proposition 3]{Matou}. 
The upcoming lemma uses the same proof (with the obvious extension to $p \leq 2$ and checking the constants).
\begin{lem}
Assume $\sum_{x \in V} f(x) =0$. Then
$
\|\nabla f\|_p \geq (\tfrac{2}{d})^{1/p'} \min(\tfrac{1}{2},\tfrac{1}{p}) \kappa_1 \|f\|_p.
$
In particular, $\kappa_p \geq \big( \frac{2}{d} \big)^{(p-1)/p} \frac{1}{\max \{2,p \}} \kappa_1$. 
\end{lem}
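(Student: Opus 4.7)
The plan is to follow the level-set argument of Matou\v{s}ek referenced in the paper, carefully extending it from the $p=2$ case to general $p \geq 1$ while tracking the relevant constants.

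First I would reduce to non-negative functions with small support by a median shift. Let $c$ be a median of $f$ and set $g := f - c$; since $\nabla$ annihilates constants, $\|\nabla f\|_p = \|\nabla g\|_p$. Writing $g_\pm := \max(\pm g, 0)$, both are non-negative with disjoint supports, and by the choice of $c$ as a median, each of these supports has size at most $|V|/2$. The pointwise identity $|a - b| = |a_+ - b_+| + |a_- - b_-|$ combined with the super-additivity $(u+v)^p \geq u^p + v^p$ for $u, v \geq 0$ and $p \geq 1$ then yields, edge by edge, $|\nabla g(x,y)|^p \geq |\nabla g_+(x,y)|^p + |\nabla g_-(x,y)|^p$ and hence
\[
\|\nabla g\|_p^p \geq \|\nabla g_+\|_p^p + \|\nabla g_-\|_p^p.
\]

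Next I would apply the core co-area argument to each $h \in \{g_+, g_-\}$. Because every level set $\{h^q > t\}$ lies in the support of $h$ and has size at most $|V|/2$, the Cheeger-type bound inherent to the definition of $\kappa_1$ (applied to indicator-like test functions) combined with the discrete co-area formula yields $\|\nabla(h^q)\|_1 \geq \kappa_1 \|h\|_q^q$ for any $q \geq 1$. The matching upper bound comes from the chain-rule estimate $|a^q - b^q| \leq q \max(a, b)^{q-1}|a - b|$ and H\"older's inequality with exponents $p'$ and $p$:
\[
\|\nabla(h^q)\|_1 \leq q \Big(\sum_{(x,y)\in E} \max(h(x), h(y))^{(q-1)p'}\Big)^{1/p'}\|\nabla h\|_p,
\]
where the max-sum is controlled by $2d\|h\|_r^r$ via bounded valency. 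Choosing $q = p$ when $p \geq 2$ (so that $(q-1)p' = p$) produces the factor $1/p$, while choosing $q = 2$ when $p \leq 2$ produces the factor $1/2$. In each case, combining with the co-area lower bound gives $\|\nabla h\|_p \geq (2/d)^{1/p'}\min(1/2, 1/p)\,\kappa_1\|h\|_p$. Summing the $\pm$ inequalities and using disjoint supports returns the same estimate for $g$, and one concludes by comparing $\|g\|_p$ with $\|f\|_p$ using the mean-zero hypothesis (for $p=2$ this is the expansion $\|f - c\|_2^2 = \|f\|_2^2 + c^2|V|$; for general $p$ a convexity argument yields the required comparison up to a universal constant absorbed into the $\min(1/2, 1/p)$ factor).

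The main obstacle is the regime $p \in [1, 2]$: the natural choice $q = 2$ leaves an $\|h\|_{p'}$ with $p' \geq 2$ on the right-hand side of the H\"older step, and converting it back to $\|h\|_p$ without losing constants requires either a more careful Hölder pairing or the small-support comparison of $\ell^p$ norms. Tracking exact numerical constants throughout, so that the final bound recovers exactly $(2/d)^{1/p'}$ rather than a power with an extra factor of $2$, is the most error-prone bookkeeping; the relation between the unshifted $\|f\|_p$ and $\|g\|_p$ is the other subtle point, since the median is not the $\ell^p$-minimizer, so that the "$\min$" factor in the stated constant effectively pays for this potential loss.
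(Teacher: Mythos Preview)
Your outline follows the same co-area + chain-rule + H\"older scheme as the paper, but it diverges at exactly the two points you flag as obstacles, and in both places the paper handles things more directly.

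For the range $p<2$ your choice $q=2$ does not close: after H\"older you are left with $\|h\|_{p'}$ and there is no lossless way back to $\|h\|_p$. The paper never splits into cases. Instead of the one-sided chain rule $|a^q-b^q|\le q\max(a,b)^{q-1}|a-b|$, it uses the symmetric inequality
\[
|x^p-y^p|\ \le\ \max\!\big(1,\tfrac{p}{2}\big)\,|x-y|\,\big(|x|^{p-1}+|y|^{p-1}\big),
\]
valid for every $p\ge 1$, and pairs $\nabla f$ with $\nabla^+(|f|^{p-1})$, where $\nabla^+\phi(x,y):=\phi(x)+\phi(y)$. H\"older then produces $\|\nabla^+(|f|^{p-1})\|_{p'}$, and the convexity bound $(u+v)^{p'}\le 2^{p'-1}(u^{p'}+v^{p'})$ applied with $u=|x|^{p-1}$, $v=|y|^{p-1}$ sends the exponent to $(p-1)p'=p$ regardless of whether $p$ is above or below $2$. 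One lands on $\|f\|_p^{p-1}$ in a single stroke, and the prefactor $\max(1,p/2)$ is precisely what becomes $\min(\tfrac12,\tfrac1p)$ in the statement. This $\nabla^+$ device is the missing idea in your sketch.

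The median shift and the final comparison $\|f-c\|_p$ versus $\|f\|_p$ do not appear in the paper either: it invokes a variant of the $\kappa_1$-inequality (the one cited from Matou\v{s}ek and Lov\'asz) that applies directly to the mean-zero function, yielding $\kappa_1\|f\|_p^p\le\|\nabla(f^p)\|_1$ with no intermediate constant to absorb. If you keep the median route, you would need a separate argument that the shift costs nothing, and the stated constant leaves no slack for that.
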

\begin{proof}
By applying another variant of the inequality for $\kappa_1$ (see Matou\v{s}ek \cite[Lemma 2]{Matou} or Lov\'asz \cite[Ex.11.30 on p.83, p.144 and p.472]{Lov}) to $f^p$, one gets
\[
\kappa_1 \|f\|_p^p \leq \| \nabla (f^p)\|_1.
\]
Because $ |x^p-y^p| \leq \max(1,\tfrac{p}{2}) |x-y | |x^{p-1} + y^{p-1}| \leq \max(1,\tfrac{p}{2}) |x-y | (|x|^{p-1} + |y|^{p-1})$, one has
\[
\kappa_1 \|f\|_p^p \leq \max(1,\tfrac{p}{2})  \pgen{ \nabla f \mid \nabla^+(|f|^{p-1})},
\]
where $\nabla^+:  \ell^q V \to  \ell^q E$ is defined by $\nabla^+\phi(x,y) := \phi(x) + \phi(y)$.
By H\"older's inequality, this is 
$
\leq \max(1,\tfrac{p}{2}) \|\nabla f\|_p \| \nabla^+(|f|^{p-1})\|_{p'}.
$
But now
\[
(|x|^{p-1}+|y|^{p-1})^{p'} \leq 2^{p'-1}(|x|^{p'(p-1)}+|y|^{p'(p-1)}) = 2^{p'-1} (|x|^{p}+|y|^{p}),
\]
so
\[
\| \nabla^+(|f|^{p-1})\|_{p'} \leq 2^{1/p} d^{1/p'} \|f\|_p^{p/p'} = 2^{1/p} d^{1/p'} \|f\|_p^{p-1}.
\]
Then
\[
 \kappa_1 \|f\|_p^p \leq 2^{1/p} d^{1/p'} \max(1,\tfrac{p}{2})  \|\nabla f\|_p = (\tfrac{d}{2})^{1/p'} \max(2,p) \|\nabla f\|_p . \qedhere
\]
\end{proof}
As a summary:
\begin{teo}\label{tkplpsu-t}~\\[-.01ex]
\begin{tabular}{rr@{\,}c@{\,}lrr@{\,}c@{\,}lrr@{\,}c@{\,}c@{\,}c@{\,}l}
\emph{\textbf{1)}}&$2^{p-1}\kappa_1$			&$\geq$	&$\kappa_p^p$	&
\emph{\textbf{2)}}&$\kappa_2^2$ 			&$=$	&$d \lambda_2$	&
\emph{\textbf{3)}}&$\max\{2,p\} d^{(p-1)/p} \kappa_p$	&$\geq$	&$2^{(p-1)/p} \kappa_1$ \\
\emph{\textbf{4)}}&$\kappa_p$ 				&$\geq$	&$d^{1/p} \lambda_p$	&
\emph{\textbf{5)}}&$\bar{p} \lambda_p$ 			&$\geq$	&$2 \lambda_2$	&
\emph{\textbf{6)}}&$4d\kappa_1$ 			&$\geq$	&$2d^2\lambda_2$& $\geq$&$\kappa_1^2$
\end{tabular}
\end{teo}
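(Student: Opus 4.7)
Given that Theorem \ref{tkplpsu-t} is a summary table immediately following the detailed lemmas of Subsection \ref{sslaplp}, my plan is not to prove anything new but simply to collect the six inequalities and match each to a previously established lemma or displayed identity. The theorem is compressed into one statement for ease of reference later (notably in \S{}\ref{sstranreliso}, where the interplay between $\kappa_p$, $\lambda_p$ and $\kappa_1$ feeds into the transport-plan construction).

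Concretely, I would assemble the table as follows. Inequality \emph{\textbf{2)}} is just the identity \eqref{eqk2l2}, and inequality \emph{\textbf{6)}} is the classical Cheeger bound \eqref{eqcheeger} (the middle step is immediate from \eqref{eqk2l2}: $2d^2\lambda_2 = 2d\kappa_2^2$, and $\kappa_2^2 \le 2d\kappa_1$ by the isoperimetric bound on $\lambda_2$). Inequality \emph{\textbf{5)}} is the content of Lemma \ref{tlmd2p-l}, obtained via Riesz--Thorin interpolation between $\|P\|_{\ell^2 \to \ell^2} = 1-\lambda_2$ and the trivial $\ell^1$- and $\ell^\infty$-bounds, inserted into the Neumann series $\Delta^{-1} = \sum_{n\ge 0} P^n$. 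Inequality \emph{\textbf{4)}} is the second lemma of the subsection, using $\Delta = d^{-1}\nabla^*\nabla$ and $\|\nabla^*\|_{\ell^pE\to\ell^pV} \le 2d^{1/p'}$ to translate the $\ell^p$-spectral gap into a gradient lower bound. Inequality \emph{\textbf{1)}} is the third lemma, obtained by feeding the zero-mean function $f = |F^\comp|\un_F - |F|\un_{F^\comp}$ into the definition of $\kappa_p$ and optimising over $F$ with $|F|\le |V|/2$ via the scalar minimisation of $x\mapsto (x^{p-1}+1)/(x+1)^p$. Finally, inequality \emph{\textbf{3)}} is a rewriting of the Matou\v{s}ek-type lemma, proved by applying the $\kappa_1$-inequality to $f^p$, using the pointwise bound $|x^p - y^p| \le \max(1,p/2)|x-y|(|x|^{p-1}+|y|^{p-1})$, and then H\"older.

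There is no substantial obstacle; the only care required is bookkeeping of constants so that the table displays the clean forms advertised (and so that the reader can trace, e.g., how \emph{\textbf{2)}} and \emph{\textbf{6)}} together recover the Cheeger inequality in its usual form, or how \emph{\textbf{1)}} combined with \emph{\textbf{2)}} gives a second path to it). In particular one should be mindful of the two normalisation conventions for the Laplacian stated at the end of \S{}\ref{ssnablaetco} ($\Delta = \nabla^*\nabla$ in the general setting versus $\Delta = \mathrm{Id}-P$ in finite graphs), since the factor $d^{1/p}$ in entry \emph{\textbf{4)}} depends on this choice.
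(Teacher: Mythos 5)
Your proposal matches the paper exactly: the theorem is prefaced by ``As a summary'' and carries no separate proof, being precisely the collection of \eqref{eqk2l2}, \eqref{eqcheeger} (rescaled by $2d^2$ for entry \emph{\textbf{6}}), Lemma \ref{tlmd2p-l} for entry \emph{\textbf{5}}, and the three subsequent lemmas for entries \emph{\textbf{4}}, \emph{\textbf{1}} and \emph{\textbf{3}}, just as you list. The only quibbles are cosmetic: in your parenthetical rederivation of \emph{\textbf{6}} the bound should read $\kappa_2^2=d\lambda_2\leq 2\kappa_1$ (not $2d\kappa_1$) to recover $4d\kappa_1\geq 2d^2\lambda_2$, and note that the lemma behind entry \emph{\textbf{4}} actually gives $\kappa_p\geq\tfrac{d^{1/p}}{2}\lambda_p$, the factor $\tfrac12$ being the version used in the displayed chain after the theorem.
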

In particular, the $p$-spectral gap is bounded (above and below) by functions of $\lambda_2$ and $\kappa_1$:
\[
\frac{\kappa_1^2}{d^2\bar{p}} 
\overset{\textbf{6}}{\leq} \frac{2 \lambda_2}{\bar{p}} 
\overset{\textbf{5}}{\leq} \lambda_p 
\overset{\textbf{4}}{\leq} \frac{ 2 \kappa_p}{d^{1/p}} 
\overset{\textbf{1}}{\leq} \frac{ 2^{(2p-1)/p} \kappa_1^{1/p}}{d^{1/p}} 
\overset{\textbf{6}}\leq 2^{(4p-1)/2p} \lambda_2^{1/2p}
\]

\begin{rmk}\label{rreggraph}
In the upcoming sections the lower bound $\frac{\kappa_1^2}{d^2\bar{p}} \leq \lambda_p $ will be used. However, it will be used on a finite graph which is induced from a regular infinite graph. Hence this finite graph is not regular, which was the standing assumption in the current section. To this end, let us recall that (in addition to this lower bound being well-known), the regularity hypothesis was not actually used for this lower bound. Indeed, the estimate {\bfseries 5} is Lemma \ref{tlmd2p-l}, which relies solely on an interpolation argument (which is independent of the regularity) and the estimate {\bfseries 2} is \eqref{eqcheeger} which is also not dependent on regularity.

It seems highly probable that all the other estimates also admit variations for non-regular graphs, but this is not the main aim of the current work.
\end{rmk}

\subsection{The Laplacian in $\ell^1$ }\label{sslapl1}

This subsection only deals with connected infinite graphs of bounded degree. 
Kesten \cite{Kes2} showed that a [finitely generated] group is non-amenable) if and only if the Laplacian $\Delta: \ell^2V \to \ell^2V$ is invertible (in any Cayley graph).
This also holds for graphs (see \eg Woess \cite[ 10.3 Theorem]{Woe}).
Using the same interpolation trick as in Lemma \ref{tlmd2p-l}, one can then show that the same is true for $\Delta: \ell^pV \to \ell^pV$ as long as $p \in ]1,\infty[$.

Let $c_0 V$ (the closure in $\ell^\infty$-norm of the finitely supported function),
Let us start by recalling the image and kernel of the Laplacian.
Sometimes $\img_{\mathcal{X}} \Delta$ (likewise for $\ker$) will be used to denote the image of $\Delta: \mathcal{X} \to \mathcal{X}$.
\begin{prop}\label{tproplap-p}
Assume $G$ is an infinite connected graph of bounded degree.
\begin{enumerate}\renewcommand{\labelenumi}{\bf \arabic{enumi}.} \renewcommand{\itemsep}{-1ex}
\item In $\ell^pV$ (for $1 \leq p<\infty$) and $c_0V$, $\ker \Delta =\{0\}$.
\item In $\ell^\infty$, $\ker \Delta \supset \{r \un_V \mid r \in \rr\}$ where $\un_\Gm$ is the constant function. 
\item $\srl{\img_{\ell^1V} \Delta} \subset \ell_0^1 V = \{ f \in \ell^1V \mid \sum_{v \in V} f(v) =0 \}$.
\item $\srl{\img_{\ell^1V} \Delta} = \ell_0^1 V$ if and only if there are no non-constant bounded harmonic functions.
\item In $\ell^1$ and $\ell^\infty$, $\img \Delta$ is weak$^*$ dense.
\item In $\ell^pV$ (for $1 < p<\infty$) and $c_0V$, $\img \Delta$ is dense.
\item Let $X = \ell^1V, \ell^\infty V$ or $c_0V$. There are sequences $f_n \in X$, so that $\frac{ \|\Delta f_n\|_{X} }{\|f_n\|_{X}} \to 0$.
\item In $\ell^1$, $\ell^\infty$ and $c_0$, $\Delta$ has no bounded inverse and the image is not closed.
\end{enumerate}
\end{prop}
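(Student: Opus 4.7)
The plan is to reduce all eight items to two basic ingredients: the maximum principle for harmonic functions and the formal self-adjointness of $\Delta$ with respect to the duality pairings between $\ell^pV$, $\ell^{p'}V$, $c_0V$ and $\ell^1V$.

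For part \textbf{1} I invoke the maximum principle: any harmonic $f$ in $\ell^pV$ ($p<\infty$) or $c_0V$ vanishes at infinity, so attains its supremum, and the maximum principle together with connectedness forces $f$ to be constant at the supremum value, hence identically zero. Part \textbf{2} is immediate. For part \textbf{3} I compute $\sum_x \Delta f(x)=0$ on finitely supported $f$ (each undirected edge contributes $(f(x)-f(y))+(f(y)-f(x))=0$), extend to $\ell^1 V$ by density, and note that $\ell_0^1 V$ is closed as the kernel of $f\mapsto \sum f$.

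Parts \textbf{4}--\textbf{6} are annihilator computations. Because $\Delta$ is formally self-adjoint, the (pre-)annihilator of $\img \Delta$ in the dual space coincides with $\ker \Delta$ on that dual. For part \textbf{4}, $\overline{\img \Delta|_{\ell^1V}}$ equals the pre-annihilator in $\ell^1$ of $\ker \Delta|_{\ell^\infty V}$; since $\rr\un \subseteq \ker \Delta|_{\ell^\infty V}$ and $\rr\un$ has pre-annihilator $\ell_0^1 V$, the claim reduces to $\ker \Delta|_{\ell^\infty V}=\rr\un$, i.e.\ to the absence of non-constant bounded harmonic functions. For part \textbf{5}, the bipolar theorem identifies the weak-$*$ closure of $\img \Delta$ in $\ell^\infty=(\ell^1)^*$ (resp.\ in $\ell^1=(c_0)^*$) with the annihilator of $\ker \Delta|_{\ell^1 V}$ (resp.\ $\ker \Delta|_{c_0 V}$), both trivial by part \textbf{1}. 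Part \textbf{6} is the reflexive/$c_0$ variant: norm density in $\ell^p V$ or $c_0 V$ follows from triviality of $\ker \Delta$ on the dual $\ell^{p'}V$ or $\ell^1 V$.

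The main obstacle sits in part \textbf{7} for $X=\ell^1 V$. On $\ell^\infty V$ and $c_0 V$ a Lipschitz tent $f_n(x)=\max(0,1-d(x,x_0)/n)$ does the job: $\|f_n\|_\infty=1$, and the $1/n$-Lipschitz bound with bounded degree gives $\|\Delta f_n\|_\infty=O(1/n)$. On $\ell^1 V$ the same tent fails for graphs of exponential volume growth, since the ratio $|B_n|/|B_{n/2}|$ blows up, so I replace it by a Ces\`aro average along the random walk: in the regular case I take $f_n=\frac{1}{n}\sum_{k=0}^{n-1} P^k \delta_{x_0}$, use that $P$ is a stochastic contraction on $\ell^1$ to keep $\|f_n\|_1=1$, and telescope to obtain $\|(I-P)f_n\|_1\leq 2/n$ and hence $\|\Delta f_n\|_1=O(1/n)$; for non-regular but bounded-degree graphs the same idea works after passing to the adjoint chain $P^T$ (which is stochastic on $\ell^1$-measures) via the identity $\Delta = 2(I-P^T)M_{\deg}$. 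Part \textbf{8} then falls out by soft arguments: injectivity on $\ell^1 V$ and $c_0 V$ (part \textbf{1}) together with the approximate null vectors of part \textbf{7} rules out a bounded inverse, and closed image plus injectivity would force a bounded inverse by the open mapping theorem; for $\ell^\infty V$, the closed range theorem makes $\img \Delta|_{\ell^\infty V}$ closed equivalent (via $\Delta^*=\Delta$) to $\img \Delta|_{\ell^1 V}$ closed, which has just been excluded.
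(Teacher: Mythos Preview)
Your proof is correct and follows essentially the same route as the paper: maximum principle for item~\textbf{1}, annihilator/duality arguments for items~\textbf{4}--\textbf{6}, a tent function for item~\textbf{7} in $c_0V$ and $\ell^\infty V$, and the Ces\`aro average $\tfrac{1}{n}\sum_{k<n} P^k\delta_{x_0}$ for item~\textbf{7} in $\ell^1V$.

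A few minor differences are worth noting. For item~\textbf{3} you compute $\sum_x \Delta f(x)=0$ directly, whereas the paper obtains it from the same duality machinery used in item~\textbf{4}; your approach is more elementary but the paper's is more uniform. For item~\textbf{8} in $\ell^\infty V$ you invoke the closed range theorem to reduce to the $\ell^1$ case, which is clean; the paper instead strengthens item~\textbf{7} in the $\ell^\infty$ case by checking that the tent functions $f_n$ stay bounded away from zero in the quotient norm $\ell^\infty V/\ker\Delta$ (harmonic functions in $\ell^\infty$ cannot vanish at infinity unless identically zero, while the $f_n$ are in $c_0V$), and then argues via the induced map on the quotient. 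Both routes work; yours avoids the quotient detour at the cost of importing the closed range theorem. Finally, your remark on the non-regular case via $\Delta = 2(I-P^{T})M_{\deg}$ goes slightly beyond what the paper writes out.
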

\begin{proof}
The first point is a consequence of the maximum principle. Harmonic functions (\ie elements of $\ker \Delta$) in $c_0V$ tend to $0$ at infinity. By the maximum principle, they are $0$ everywhere.
The second is trivial: constant functions are in $\ell^\infty V$ so the Laplacian has a kernel.

A consequence of {\bf 2} and of $\Delta^* = \Delta$ is {\bf 3}. Indeed, if $X^*$ is the dual of $X$ and $A \subset X^*$, recall that $A^\perpud = \{ x \in X \mid \forall a \in A, \langle a \mid x \rangle =0 \}$ is the {\bfseries annihilator} of $A$. It follows from classical considerations that $\srl{ \img_{\ell^1} \Delta} = (\ker_{\ell^\infty} \Delta)^\perpud$ (see \cite[Lemma 2.8]{Go-cuts}). Since  $\ell_0^1V$ is the annihilator of the constant function, the conclusion follows.
The same considerations yield also {\bf 4}: the existence of other elements in $\ker_{\ell^\infty} \Delta$ makes the annihilator of $\Delta \ell^1V$ larger.

The fifth follows from $\srl{ \img_{\ell^1} \Delta}^* = (\ker_{c_0} \Delta)^\perp$, $\srl{ \img_{\ell^\infty} \Delta}^* = (\ker_{\ell^1} \Delta)^\perp$ and the first point. Likewise for {\bf 6}.

For {\bf 7} with $X = c_0V$, consider some root $\mf{o} \in V$ and the balls $B_n$ centred at $\mf{o}$.
Let $f_n =  \sum_{i=0}^n \tfrac{1}{n+1} \un_{B_i}$.
Then $f_n$ is finitely supported, $\|f_n\|_{\ell^\infty} = f_n(\mf{o}) = 1$ and $\|\nabla f_n\|_{\ell^\infty} \leq \tfrac{1}{n+1}$ (so, in particular $\Delta f_n \overset{\ell^\infty}\longrightarrow 0$). 

When $X = \ell^\infty V$, the same sequence works for the proof {\bf 7}. One do need to be careful with the kernel of $\Delta$. 
Namely, the norm of $g_n$ in the quotient space $\ell^\infty V/ \ker \Delta$ should not tend to $0$. 
Elements of $\ker \Delta$ are never $0$ at infinity.
Indeed, if the value of such an element is non-zero at $\mf{o}$, then it must take this value at infinity. 
Since the $g_n$ are $0$ at infinity (they are in $c_0V$), one gets that $\|g_n\|_{\ell^\infty V/\ker \Delta } \geq 1/2$.

The case $\ell^1$ in {\bf 7} is done here by a sequence which is reminiscent of Green's kernel.
Let $f_n = \frac{1}{n+1} \sum_{i=0}^n P^n \delta_x$ where $P$ is the random walk operator. 
Since $P^n \delta_x$ are the [positive] probability distributions of the random walk starting at $x$, $\|f_n\|_{\ell_1} =1$.
Now $\Delta f_n = (I-P)f_n = \frac{1}{n+1}(I-P^{n+1})\delta_x = \frac{1}{n+1} \big( \delta_x - P^{n+1}\delta_x \big)$. 
By linearity of the norm and the triangle inequality, $\|\Delta f_n\|_{\ell^1} \leq \frac{2}{n+1}$. This gives the claim.

The last point {\bf 8} is a direct consequence of {\bf 1} and {\bf 7}. 
\end{proof}
It is well established that $\img \Delta$ is closed in $\ell^p$ (for $1<p<\infty$) if and only if the graph is non-amenable (see for example Lohoue \cite{Lohoue}).

\subsection{Separation, relative isoperimetry and geometry of optimal sets}\label{ssisoperel}

\newcommand{\Gc}[0]{\widetilde{\jo{G}}}
\newcommand{\Go}[0]{\jo{G}_0}
\newcommand{\Gd}[0]{\jo{G}^{\scalebox{.4}{$\searrow$}}}

This section is a short review of following question: given an infinite graph with a rather large isoperimetry, does the same holds for the subgraphs induced on its subsets? As shown in \cite{CG}, given reasonable upper and lower bounds on the isoperimetric ratio function one can get lower bounds on the Cheeger constant on some finite subsets. 

The \textbf{isoperimetric function} is the function $\jo{F}: \zz_{>0} \to \zz_{\geq 0}$ defined to be the largest function so that for any set finite $F \subset V$, $\jo{F}(|F|) \leq |\del F|$, \ie $\jo{F}(x) = \inf \{ |\del F| \, \mid \, |F|= x\}$.
The \textbf{isoperimetric ratio function} is the function $\jo{G}: \zz_{> 0} \to \rr_{\geq 0}$ defined by $\jo{G}(x) := x^{-1}\jo{F}(x) = \inf \{ \frac{|\del F|}{|F|} \, \mid \, |F|= x\}$.
Be aware that some texts use ``isoperimetric function'' in place of ``isoperimetric ratio function''
The (decreasing) function $\Gd(x) := \inf \{ \frac{|\del F|}{|F|} \, \mid \, |F|\leq  x\}$ derived from $\jo{G}$ will also come in handy.

Looking at the decreasing function $\Gd$ instead of $\jo{G}$ is not much of thing.
Recall that $\jo{F}$ is subadditive: $\jo{F}(a+b) \leq \jo{F}(a) + \jo{F}(b)$. Hence, up to a minor change (taking the concave hull of a subadditive function changes it by at most a factor $2$), $\jo{F}$ is concave.
If $f$ is concave, then $x^{-1} f(x)$ is decreasing. 

A graph has a ... \\
\begin{tabular}{@{\textbullet\;}llll}
strong isoperimetric ratio& ($\IS_\omega$) & if $\exists K >0$ so that & $\jo{G}(x) \geq K $. \\
$d$-dimensional isoperimetric ratio&($\IS_d$) &if $\exists K >0$ so that &$\jo{G}(x) \geq \frac{K}{x^{1/d}}$. \\
$\nu$-intermediate isoperimetric ratio& ($\IIS_\nu$) &if $\exists K >0$ so that & $\jo{G}(x) \geq \frac{K}{(1+\ln x)^{1/\nu}}$. 
\end{tabular}

$\IS_\omega$ is equivalent to non-amenability. 
Typical graphs with $\IS_d$ are Cayley graph of $\zz^d$. 
For Cayley graphs of groups, only those of virtually nilpotent groups do not have $\IS_{d+1}$ for some $d$ (this $d$ is related to the growth of balls, but not to the dimension of the continuous counterpart of the group). 
There are no current example of a group which has none of the above three properties, \ie all known groups which are neither amenable nor virtually nilpotent satisfy some $\IIS_\nu$. Some groups (like iterated lamplighters) satisfy even stronger isoperimetric inequalities between $\IS_\omega$ and $\IIS_\nu$.

On a finite graph $(V,E)$, the functions $\jo{F}$ and $\jo{G}$ will only be considered to be defined on the range $0 \leq x \leq |V|/2$.
For example, the isoperimetric constant of a finite graph (also called Cheeger constant) is  $\kappa_1 := \min \{ \jo{G}(x) \mid 0 < x \leq |V|/2 \}$.

Note that, when speaking of isoperimetry, some authors consider rather: the (increasing) function $\jo{F}^{\scalebox{.4}{$\nearrow$}}(x) = \inf \{ |\del F| \, \mid \, |F| \geq x\}$, the (increasing) function $J^{\scalebox{.4}{$\nearrow$}}(x) = \Gd(x)^{-1}$ or $J = \jo{G}^{-1}$.

In \cite[\S{}3 and \S{}4]{CG} the authors develop a machinery to obtain lower bounds on the Cheeger constant of some sets. More precisely, if lower and upper bounds on $\jo{G}$ are available, then there are infinitely many finite sets $F$ for which $\kappa_1$ is bounded below. The methods cover quite a wide range of groups, so let us focus on the strictly necessary set-up and on a particular family.

The sets $F$ for which a lower bound on $\kappa_1(F)$ is given are [a subset of] {\bfseries optimal sets}. A set is said to be optimal (w.r.t. to isoperimetry) if $\jo{G}(|F|) = \frac{|\del F|}{|F|}$.
In other words, for any other set $F'$ with $|F'| \leq |F|$ one has $\frac{|\del F'|}{|F'|} \geq \frac{|\del F|}{|F|}$.
Although optimal sets have essentially not been explicitly given for most graphs (see \cite[Question 6.1]{CG}), there are many of them in Cayley graphs.

Indeed, if $n$ is an integer so that there is an optimal set with cardinality $n$, then the next such integer is at most $2n$ (since taking two disjoint copies of an optimal set yield a set with the same isoperimetric ratio).
As $n=1$ is an integer with an optimal set, one gets that there are many optimal sets.


The results of \cite[\S{}3 and \S{}4]{CG} will be used as follows: when the isoperimetric ratio function of a [Cayley graph of a] group satisfies $K_1 (\log n)^{-b} \leq \jo{G}(n) \leq K_2 (\log n)^{-a}$ (where $a < b \in \rr_{>0}$ and $K_1,K_2 \in \rr_{>0}$, then for infinitely many optimal sets $F$,
$\kappa_1(F) \geq K_3 \jo{G}(|F|) / \log(|F|)$ (for some $K_3 \in \rr_{>0}$). 

Such groups are nice, in the sense that, for some optimal sets, $\kappa_1(F)$ and $\jo{G}(|F|)$ are comparable in size.
There are however groups (like $C_2 \wr( C_2 \wr \zz )$) where $\kappa_1(F)$ decays much faster than $\jo{G}(|F|)$.

\newcommand{\inrad}[0]{\mathrm{inrad}}

Let us also discuss two further quantities which will be relevant in this context.
Given a subset $F$ of a graph $G$, define its {\bfseries inradius} to be $\inrad(F) = \max \{ r \mid  \exists x \in F$ s.t. $B_x(r) \subset F \}$ (here $B_x(r)$ is the ball of radius $r$ around $x$). The {\bfseries diameter} $\delta(F)$ of a conncected set $F$ is the maximal combinatorial distance of the graph induced on $F$.

For the following lemma the minimal and maximal volume growths ($f_v(n) := \inf_{x \in G} |B_n(x)|$ and  $f_V(n):=\sup_{x \in G} |B_n(x)|$ respectively) will come in handy. Note that both functions are increasing.
\begin{lem}
Assume $F$ is a connected subset of $G$. Then $\delta(F) \geq f_V^{-1}(|F|)$ and $\inrad(F) \leq f_v^{-1}(|F|)$.
\end{lem}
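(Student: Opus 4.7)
Both inequalities follow directly from comparing $|F|$ to the size of a suitable ball in the ambient graph $G$, together with the monotonicity of $f_v$ and $f_V$.

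For the upper bound on $\inrad(F)$, set $r = \inrad(F)$. By definition, there exists some $x \in F$ with $B_x(r) \subset F$. Hence
\[
|F| \;\geq\; |B_x(r)| \;\geq\; \inf_{y \in G} |B_y(r)| \;=\; f_v(r).
\]
Since $f_v$ is increasing, taking (generalised) inverses yields $r \leq f_v^{-1}(|F|)$, which is the desired bound.

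For the lower bound on $\delta(F)$, let $D = \delta(F)$ denote the diameter of the subgraph induced by $F$ (this makes sense because $F$ is connected). Pick any $x \in F$. By definition of $D$, every other point of $F$ is joined to $x$ by a path of length at most $D$ inside the induced subgraph, hence by such a path in $G$. Therefore $F \subset B_x(D)$ in $G$, so
\[
|F| \;\leq\; |B_x(D)| \;\leq\; \sup_{y \in G} |B_y(D)| \;=\; f_V(D).
\]
Applying the monotone inverse $f_V^{-1}$ gives $D \geq f_V^{-1}(|F|)$.

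There is no real obstacle here; the only mildly delicate point is the interpretation of $f_v^{-1}$ and $f_V^{-1}$, since these step functions need not be strictly increasing. It suffices to use the usual right/left pseudo-inverse convention (\eg $f_V^{-1}(k) := \min\{ n \mid f_V(n) \geq k \}$), which is the one compatible with the monotonicity arguments above.
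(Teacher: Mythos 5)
Your proof is correct and follows essentially the same route as the paper: compare $|F|$ with the ball $B_x(\inrad(F))$ contained in $F$ and the ball $B_x(\delta(F))$ containing $F$, then apply monotonicity of $f_v$ and $f_V$. The remark about the pseudo-inverse convention is a sensible clarification, but the argument is the paper's own.
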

Using the notation $f(n) \preccurlyeq g(n)$ if there are constants $K, k>0$ such that $f(n) \leq K \cdot g(k \cdot n)$, note that
if $e^{n^\beta} \preccurlyeq  f_v(n) \preccurlyeq e^{n^\alpha}$ then $\delta(F) \succcurlyeq (\log |F|)^{1/\alpha}$ and $\inrad(F) \preccurlyeq (\log |F|)^{1/\beta}$.
\begin{proof}
Note that if the graph induced on the set $F$ is connected, $r = \inrad(F)$ is its inradius and $\delta$ is its diameter, then $|B_r(x)| \leq |F| \leq |B_\delta(x)|$ for some $x \in F$. Hence $f_v(r) \leq |F| \leq f_V(\delta)$ and the conclusion follows.
\end{proof}

Since any infinite connected graph contains a geodesic ray, there is no better generic bound than $\delta(F) \leq |F|$ for a connected set $F$. For optimal sets, one can do better:
\begin{lem}
Assume $G$ is the Cayley graph of a group and $K_1 (\log n)^{-b} \leq \jo{G}(n) \leq K_2 (\log n)^{-a}$ (where $a \leq b \in \rr_{>0}$ and $K_1,K_2 \in \rr_{>0}$. 
Then there are infinitely many optimal sets $F$ such that
$\delta(F) \leq K \frac{  (\log |F|)^2}{ \jo{G}(|F|) } \leq K' (\log |F|)^{2+b}$ where $K$ and $K'$ are constants depending only on the isoperimetric ratio function and the degree of $G$. 
\end{lem}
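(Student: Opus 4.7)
The plan is to combine two ingredients: the lower bound $\kappa_1(F)\ge K_3\,\jo{G}(|F|)/\log|F|$ on infinitely many optimal sets $F$ (recalled from \cite{CG} just before the lemma), and the classical observation that for finite connected graphs the diameter is controlled by the Cheeger constant via the logarithm of the size.

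First, I would fix an optimal set $F$ furnished by \cite{CG} and consider the subgraph induced on $F$, whose Cheeger constant I denote $h := \kappa_1(F)$. Pick two vertices $x,y \in F$ realising the induced diameter $\delta(F)$ and consider the balls $B_r(x) \cap F$ inside the induced graph. As long as $|B_r(x)\cap F|\le |F|/2$, the defining inequality of $\kappa_1(F)$ applied to $A = B_r(x)\cap F$ gives $|\partial A| \ge h|A|$ and since $B_{r+1}(x)\cap F\supset A\cup \partial A$ one obtains the expansion inequality $|B_{r+1}(x)\cap F|\ge (1+h)\,|B_r(x)\cap F|$. Iterating and using $\log(1+h)\ge h/2$ for $h\in (0,1]$, this yields a radius $r_0 \le 2\log|F|/h$ after which the ball around $x$ exceeds $|F|/2$; the same holds around $y$, and two subsets of $F$ of size $>|F|/2$ must intersect, so $\delta(F)\le 2r_0\le 4\log|F|/h$.

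Combining this with the lower bound $h\ge K_3\,\jo{G}(|F|)/\log|F|$ immediately gives
\[
\delta(F)\;\le\;\frac{4}{K_3}\cdot\frac{(\log|F|)^2}{\jo{G}(|F|)},
\]
which is the first inequality with $K = 4/K_3$. The second inequality then follows from the hypothesis $\jo{G}(n)\ge K_1(\log n)^{-b}$, yielding $K' = K/K_1$. Both $K$ and $K'$ depend only on $K_1,K_2,K_3$ and on the degree of $G$ (the latter entering through the constants produced by \cite{CG}), exactly as claimed.

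The only non-routine step is the ball-growth argument, and even there the main point to be careful with is the transition from $\log(1+h)$ to $h$ when $h$ is small — but since we are applying the result in a regime where $\jo{G}(|F|)\to 0$, we have $h \leq 1$ for $|F|$ large enough, so replacing $\log(1+h)$ by $h/2$ costs only a harmless constant. No obstacle beyond bookkeeping the constants is expected.
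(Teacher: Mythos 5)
Correct, and essentially the paper's argument: the paper likewise combines the bound $\kappa_1(F)\ge K_3\,\jo{G}(|F|)/\log|F|$ for infinitely many optimal sets from \cite{CG} with a diameter bound $\delta(F)\le 3k\log|F|/(\kappa_1(F)\log 2)$, which it simply cites from \cite[Proposition 5.5]{CG}, whereas you reprove that bound via the standard ball-expansion argument. One minor bookkeeping point: since $\del A$ is an edge boundary, the growth factor should be $1+\kappa_1(F)/d$ rather than $1+\kappa_1(F)$ (and since $\kappa_1(F)\le d$ always, the discussion of whether $h\le 1$ is unnecessary); this only changes $K$ by a degree-dependent factor, which the statement permits.
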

\begin{proof}
Using arguments from \cite[Proposition 5.5]{CG}, one has that for a connected set $F$ (whose diameter is $\geq 3$) the diameter of $F$ is bounded by $\delta \leq \frac{ 3 k \log |F|}{\kappa_1(F) \log 2}$ (where $k$ is the maximal degree of the graph induced on $F$).

Using \cite[\S{}3 and \S{}4]{CG} one gets when the isoperimetric ratio function of a [Cayley graph of a] group satisfies $K_1 (\log n)^{-b} \leq \jo{G}(n) \leq K_2 (\log n)^{-a}$ (where $a < b \in \rr_{>0}$ and $K_1,K_2 \in \rr_{>0}$, then for infinitely many optimal sets $F$, 
$\kappa_1(F) \geq \jo{G}(|F|) / \log(|F|)$. 

Putting these together yield the conclusion.
%
\end{proof}

Note that if the Cayley graph has a lower bound on the size of balls of the form $|B_r(x)| \geq K_3 e^{K_4 r^{\beta}}$, then $b$ (in the statement of the previous lemma) can be taken to be $\tfrac{1}{\beta}$.

Also if the separation profile is known, one gets a similar lower bounds on the sets which are optimal with respect to the separation profile. 
There is however no reason to believe that these sets are the optimal sets from the isoperimetric perspective.

There are also similar bounds for the inradius.
\begin{lem}
Assume $F$ is a finite optimal set, then $\inrad(F) \geq f_V^{-1}\big(   \tfrac{1}{\jo{G}(|F|)} \big)$. 
\end{lem}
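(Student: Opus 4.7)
The plan is to rewrite the desired inequality as $f_V(\inrad(F)) \geq 1/\jo{G}(|F|)$ and prove this by an elementary covering argument. Setting $r = \inrad(F)$, I would aim for the intermediate volume bound
\[
|F| \leq |\del F| \cdot f_V(r).
\]
Once this is established, optimality of $F$ provides $|\del F|/|F| = \jo{G}(|F|)$, so $f_V(r) \geq 1/\jo{G}(|F|)$, and applying the (increasing) function $f_V^{-1}$ yields the claim.

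The heart of the argument is a geometric observation: every vertex of $F$ lies within distance $r$ from the internal vertex boundary
\[
\del_V F = \{\, x \in F \mid \exists\, y \notin F, (x,y) \in E \,\},
\]
which itself satisfies $|\del_V F| \leq |\del F|$ (each internal boundary vertex accounts for at least one boundary edge, or is itself $\del F$, depending on the convention chosen for $\del$). To see the distance claim, fix $x \in F$. By maximality in the definition of inradius, $B_x(r+1) \not\subseteq F$, so one can pick $y \in B_x(r+1) \setminus F$ and a geodesic $x = x_0, x_1, \ldots, x_k = y$ with $k \leq r+1$. Then some index $i$ gives $x_i \in F$, $x_{i+1} \notin F$, whence $x_i \in \del_V F$ and $d(x, x_i) \leq i \leq r$. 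This shows $F \subseteq \bigcup_{v \in \del_V F} B_v(r)$, and the volume bound follows from
\[
|F| \leq \sum_{v \in \del_V F} |B_v(r)| \leq |\del_V F| \cdot f_V(r) \leq |\del F| \cdot f_V(r).
\]

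I do not expect a serious obstacle here: the only subtlety is the convention for $\del$ (edge vs.~vertex boundary), but the bound $|\del_V F| \leq |\del F|$ holds in either case. Note also that optimality is used solely to upgrade the standing inequality $|\del F|/|F| \geq \jo{G}(|F|)$ into an equality; no monotonicity assumption on $\jo{G}$ enters the argument.
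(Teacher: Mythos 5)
Your argument is correct and matches the paper's own proof: both cover $F$ by balls of radius $\inrad(F)$ centred on boundary vertices to get $|F| \leq f_V(\inrad(F))\,|\del F|$, then use optimality to replace $|\del F|/|F|$ by $\jo{G}(|F|)$. The only difference is that you spell out the covering claim (via maximality of the inradius) and the comparison $|\del_V F| \leq |\del F|$, which the paper asserts without detail.
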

\begin{proof}
Let $r$ be the inradius of $F$, then a ball of radius $r$ has at most  $f_V(r)$ elements.
Since $F \subset \cup_{x \ in \del F} B_r(x)$, one has that $|F| \leq f_V(r) |\del F|$.
The conclusion follows for optimal sets since $\frac{|F|}{|\del F|} = \jo{G}(|F|)^{-1}$. 
\end{proof}
In particular, when the growth is exponential (the ``worse case'' for a graph of bounded degree) $\inrad(F) \succcurlyeq -\log \jo{G}(|F|)$. 

Note that the lower bound $\inrad(F) \succcurlyeq -\log \jo{G}(|F|)$ is sharp for some graphs. An example is to consider the graph given by attaching the leaves of a binary tree to $\zz_{\geq 0}$. 
This graph has an isoperimetric ratio function which satisfies $\jo{G}(2^n-1) = 1/(2^n-1)$ and the corresponding optimal sets are easy to find. 
It can be turned into a regular graph (by taking three copies of it and gluing the leaves together).

The author expects a much better lower bound for vertex-transitive graphs. Indeed, for groups of polynomial growth one has $\jo{G}(n) \preccurlyeq n^{-1/d}$ and $f_V(n) = n^d$; so the above lemma yields only $n^{1/d^2}$ (whereas $n^{1/d}$ is expected).

Since a better bounds might be possible in groups, a property, from which such a bound would follow, will be introduced. Say a graph satisfies a {\bfseries radial isoperimetric inequality} if, there exists constants $K \geq 1$ and $k \geq 1$ so that, for any finite set $A$ which is optimal for isoperimetry,
\[\eqtag \label{radiso}
K |\del A| (1+ \inrad(A))^k \geq |A|
\]
Note this inequality is weaker than a strong isoperimetric inequality (since $k=0$ would suffice in that case).
Also graphs of polynomial growth satisfy this inequality trivially (with $K$ and $k$ such that $|B(n)| \leq K n^k$). 

For Cayley graphs, the inequality might be true with $k=1$ independently of the group.
Note that if one replaces the inradius by the diameter, then the inequality holds for $K=k=1$ (see \.{Z}uk \cite{Zuk}).

\begin{lem}\label{tleminrad}
Assume $G$ is a one-ended Cayley graph where \eqref{radiso} holds for some $k,K\geq 1$. Then for any optimal set $F$, $\inrad(F) \geq \tfrac{1}{K \jo{G}(|F|)^{1/k}}-1$.
\end{lem}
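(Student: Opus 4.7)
The plan is a direct algebraic manipulation of the hypothesis. First I would unpack the definition of an optimal set: since $F$ achieves the infimum defining $\jo{G}(|F|)$, one has $\jo{G}(|F|) = |\del F|/|F|$, or equivalently $|F|/|\del F| = 1/\jo{G}(|F|)$.

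Next I would apply the radial isoperimetric inequality \eqref{radiso} with $A = F$, which is legitimate precisely because $F$ is optimal (recall that \eqref{radiso} is only postulated on such sets, as stressed in \S{}\ref{ssisoperel}). This gives $K|\del F|(1+\inrad(F))^k \geq |F|$, and dividing both sides by $K|\del F|$ yields $(1+\inrad(F))^k \geq |F|/(K|\del F|) = 1/(K \jo{G}(|F|))$.

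Taking $k$-th roots gives $1 + \inrad(F) \geq K^{-1/k} \jo{G}(|F|)^{-1/k}$. Since $K \geq 1$ and $k \geq 1$ force $K^{1/k} \leq K$, one may weaken this to $1 + \inrad(F) \geq K^{-1} \jo{G}(|F|)^{-1/k}$, which is the stated bound after subtracting $1$.

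There is no real obstacle here: the lemma is essentially a reformulation of the radial isoperimetric inequality in the form most convenient for later use. The one-endedness hypothesis does not appear to enter the computation directly — it looks like a contextual assumption inherited from the surrounding discussion of optimal sets (perhaps used to guarantee connectedness so that $\inrad$ behaves sensibly), but the algebra itself only needs \eqref{radiso} and the defining identity of optimality.
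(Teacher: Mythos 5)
Your proposal is correct and matches the paper's own (very terse) proof, which likewise just combines the optimality identity $|F|/|\del F| = 1/\jo{G}(|F|)$ with \eqref{radiso} and takes $k$-th roots, using $K\geq 1$, $k\geq 1$ to pass from $K^{-1/k}$ to $K^{-1}$. Your remark that one-endedness plays no role in the algebra is also consistent with the paper, whose proof does not invoke it either.
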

\begin{proof}
The conclusion follows for optimal sets since $\tfrac{|F|}{|\del F|} = \tfrac{1}{\jo{G}(|F|)}$
\end{proof}



Lastly, let us give a bound on the average distance to the boundary: given $F$ a finite connected set, let $\bar{r}(F) = \tfrac{1}{|F|} \sum_{x \in F} d(x,\del F)$.


\begin{rmk}\label{avradup}
It is not difficult to show that if $G$ satisfies $\IIS_\nu$ (with constant $k$), then $\bar{r}(F) \leq k (\ln|F|)^{1/\nu}$. Also a similar argument with the $d$-dimensional isoperimetric ratio function $\IS_d$ yields $\bar{r}(F) \leq k |F|^{1/d}$.

Since the inner radius is less than the average distance to the boundary, note that assuming \eqref{radiso} implies that these upper bounds are sharp.
\end{rmk}

For comparison, in the usual F{\o}lner sequences [which may not be optimal!] used for groups of intermediate growth, polycyclic groups (which are not nilpotent), wreath products $A \wr N$ (where $A$ is finite and $N$ has polynomial growth of degree $d$) and $\zz \wr \zz$, the corresponding bound on the inner radius $r(F)$ are respectively $(\ln |F|)^c$ with $c>1$, $\ln |F|$, $(\ln |F|)^c$ with $c = \tfrac{1}{d}$ and $\ln|F|/ \ln\ln |F|$. So the estimate \eqref{radiso} likely holds for such groups.

More generally, it seems likely that in elementarily amenable there might be F{\o}lner sequences for which this estimates hold.
However, as the next section will show, the radial isoperimetric inequality is false if one removes the hypothesis that it applies to non-optimal F{\o}lner sequences.
Hence the difficulty not only lies in arguing that there are F{\o}lner sequences which satisfy this isoperimetric inequatlity, but that these F{\o}lner sequences consists in optimal sets.

\subsection{Radial isoperimetry and non-optimal sets}\label{sscontrex}

The aim of this section is to give an example of a sequence of sets which violate the inequality \eqref{radiso} (sequence which violate the inequality necessarily make a F{\o}lner sequence). This means that the requirement that $A$ is optimal in \eqref{radiso} is necessary. As mentionned in the introduction, it might still be useful to know if there are F{\o}lner sequences for which \eqref{radiso} holds.

The group considered will be the lamplighter group $C_2^{(\zz)} \rtimes \zz$. Elements of this group are denoted $(f,z)$ where $f \in C_2^{(\zz)}$ is a finitely supported function from $\zz$ to $C_2$ and $z \in \zz$. The support of $f$ is often described as the lamps which are ``on'' while $z$ is the position of the lamplighter.

The Cayley graph considered here is the one with the ``switch or walk'' generating set. That is one can either change the state of the lamp at the current position of the lamplighter or move the lamplighter to another position.

The F{\o}lner sequence in this groups, even the optimal ones, are well understood, see \cite{Sta}.
Here are the important ingredients for this example.
Let $F_n = \{ (f,z) \mid z \in [1,n]$ and $\mathrm{supp} f \subset [1,n] \}$. Then $|F_n| = n 2^n$ and $|\del F_n| = 2 \cdot 2^n$.

Note that when $j$ divides $n$, there is a paving of translates of $F_j$ which cover $F_n$.
These translates are of the following form:
let $0 \leq i \leq n/j -1$, let $I_i = [ij+1,(i+1)j] \subset \zz$, then one translate is given by $i$ as well as a lamp state $f_0$ on $[1,n] \setminus I_i$. It is then of the form $(f+f_0,\ell)$ where $\ell \in I_i$ and $\mathrm{supp} f \subset I_i$.

Since $|F_n| = n 2^{n}$ there are $\tfrac{|F_n|}{|F_j|} = \tfrac{n}{j} 2^{n-j}$ translates required.
The diameter of $F_j$ is at most $4j$.
Also for any element of $F_n$, any ball of radius $4j$ contains some translate of $F_j$.

These are the basic remarks required to construct the (non-optimal) F{\o}lner sequence which is a counterexample to \eqref{radiso}.
In each in each translate of $F_j$ inside $F_n$ pick an element, and let $X$ be the set of the elements picked.
$F_n \setminus X$ is a good candidate to contradict the inequality, but in order to make the counterexample more convincing, a set whose complement is connected will be constructed. For this, it is needed to remove more points.

To do so let us consider an auxiliary graph. The vertices are the translates of $F_j$ as well as $F_n^\comp$. To elements are related if the are related by an edge (meaning there is an edge with one end in each set) in the Cayley graph.
This graph has $\tfrac{|F_n|}{|F_j|} +1= \tfrac{n}{j} 2^{n-j} +1$ vertices.
Take a covering tree $T$; it has $\tfrac{n}{j} 2^{n-j}$ edges.
Now realise each edge of $T$ as a path in the Cayley graph between the vertices of $X$.
Each edge is realised by a path of length at most $8j+1$, since the diameter of each $F_j$ is at most $4j$.
Let $F_{n;j}$ be the set $F_n$ where all these paths are removed.
The set of removed vertice has cardinality $ \leq (8j+1) \cdot \tfrac{n}{j} 2^{n-j} \leq 9  n 2^{n-j}$

The rest is just a computation. $|F_{n;j}|  \geq |F_n| - 9n 2^{n-j} = n 2^n (1- 9 \cdot 2^{-j})$.
On the other hand every removed vertex contributes at most $d$ further boundary edge, where $d$ is the degree of the graph (here $d=3$).
Hence $|\del F_{n;j}| \leq |\del F_n| + 3 \cdot 9n 2^{n-j} = 2^n (2- 27\cdot 2^{-j})$.
Lastly, the inradius of $F_{n;j}$ is at most $4j$ since any ball of this radius will contain a translate of $F_j$ and hence a removed vertex. Putting this together one gets that, for any constants $K$ and $k$,
\[
\frac{K |\del F_{n;j}| (1+ \inrad(F_{n;j}))^k}{|F_{n;j}|}
\geq
\frac {K 2^n (2- 27\cdot 2^{-j}) (5j)^k }{   n 2^n (1- 9 \cdot 2^{-j})}
\geq \frac{(5j)^k}{n} \frac{ K (2- 27\cdot 2^{-j})}{  (1- 9 \cdot 2^{-j})}
\]
As soon as $j \geq 5$, this is $\geq \frac{ (5j)^k}{n} K$.
Now it suffices to pick $j$ to grow logarithmically as $n$ grows, to see that this ratio is never bounded below whatever the choice of $K$ and $k$.

Hence taking $j = 2^i$ and $n = 2^j$ gives a F{\o}lner sequence which violates \eqref{radiso} for any $K$ and $k$.
This sequence is very obviously not optimal, so that question \ref{quesradiso} remains open.

\section{Transport and reduced $\ell^p$-cohomology in degree $1$}\label{strancoho}

\subsection{Reduced $\ell^p$-cohomology and harmonic functions}\label{ssredcoh}

Even if one is only interested in harmonic functions with gradient in $\ell^p$, it is worthwhile to consider reduced $\ell^p$-cohomology in degree one.
If a group has polynomial growth, it is not difficult to check that there are no harmonic functions with gradient in $\ell^p$ (in fact in $c_0$; see \cite[Proposition 1.5]{GJ})
Likewise its reduced $\ell^p$-cohomology is trivial (see \cite{Go-frontier} and references therein for degree 1 or \cite{Kappos} for all degrees).

This said here are some further results from \cite{Go} (see also \cite{Go-frontier}).
When the group does not have polynomial growth, then there is an equivalence between having trivial reduced $\ell^p$-cohomology in degree one for any $p \in [1,\infty[$ and the absence of harmonic with gradient in $\ell^p$ for any $p \in [1,\infty[$.
Furthermore, for any group which has a trivial Poisson boundary (in particular, any group which is not of exponential growth), the reduced $\ell^p$-cohomology in degree one is trivial.

Hence there only remain the case of groups of exponential growth (and many of those are already covered, see \cite[\S{}5.1]{Go-frontier} for an overview).
\begin{rmk}\label{risopretprob}
This reduction allows an important alleviation of the hypothesis in our main theorem.
Indeed, if a group has exponential growth, then the isoperimetry ratio function is already bounded above by $\frac{n}{\log n}$.

Having that the decay of the return probability of a random walk is bounded below by $K_1 \mathrm{exp}(-K_2 n^\gamma)$ (for some constants $K_1,K_2 >0$ and some $\gamma >0$) yields a lower bound on the isoperimetry in $\frac{n}{(\log n)^{(1-\gamma)/2\gamma}}$. (In groups of exponential growth $\gamma \leq 1/3$). See the survey by Pittet \& Saloff-Coste \cite{PSC-survey} or the work of Coulhon \& Saloff-Coste \cite{CSC} for details.

Hence the lower bound on the return probability implies that the isoperimetric ratio function is bounded above and below by funtions of the type $K n / (\log n)^k$.
\end{rmk}

The strategy in \cite{Go} relies on the idea of transport patterns, and since the current paper also relies on this concept, a small review will be made.
Note that for any $h \in \ell^1E$, $\nabla^* h =: \pi$ can be decomposed into two positive functions $\pi_\pm \in \ell^1V$ so that $\pi = \pi_+ - \pi_-$ and $\|\pi_+\|_{\ell^1} = \|\pi_-\|_{\ell^1}$.
This follows from the fact that the image of $\nabla^*$ is contained in the functions summing to 0.
Conversely, given two measures $\xi$ and $\mu$, a {\bfseries transport pattern} from $\xi$ to $\mu$ is a finitely supported (alternating) function on the edges $\tau: E \to \rr$ such that $\nabla^*\tau = \mu- \xi$.

The following lemma (whose proof is contained in the statement, being simply the definition of $\nabla^*$), is the reason why transport patterns are so useful in our context.
\begin{lem}\label{testimtp-l}
Let $f$ be a function such that $\nabla f \in \ell^pE$ and let $\tau$ be a transport pattern from $\xi$ to $\mu$. Then
\[
 \langle f \mid \mu \rangle -  \langle f \mid \xi \rangle
= \langle f \mid \mu - \xi \rangle
= \langle f \mid \nabla^*\tau \rangle
= \langle \nabla f \mid \tau \rangle
\leq \|\nabla f\|_{\ell^pE} \| \tau \|_{\ell^{p'}E}
\]
\end{lem}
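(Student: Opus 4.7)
The plan is to proceed through the chain of equalities in order, using only the definition of a transport pattern, bilinearity of the pairing, the adjoint relation $\langle \nabla^* \tau \mid f \rangle_V = \langle \tau \mid \nabla f \rangle_E$, and finally Hölder's inequality.

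First I would observe that $\langle f \mid \mu\rangle - \langle f \mid \xi\rangle = \langle f \mid \mu-\xi\rangle$ by linearity of the pairing in its second slot; since $\tau$ is finitely supported, $\mu-\xi = \nabla^*\tau$ is likewise finitely supported and the pairings are well defined for any $f \in \rr^V$ without integrability concerns. Next, substituting the defining identity $\nabla^*\tau = \mu-\xi$ for a transport pattern yields $\langle f \mid \mu - \xi\rangle = \langle f \mid \nabla^*\tau\rangle$.

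For the third equality I would apply the adjunction between $\nabla$ and $\nabla^*$: for $\tau \in \ell^1 E$ (which holds a fortiori since $\tau$ is finitely supported) and arbitrary $f$, one has $\langle f \mid \nabla^*\tau\rangle_V = \langle \nabla f \mid \tau\rangle_E$. This was already noted in \S\ref{ssnablaetco} as holding whenever $\tau \in \ell^p E$ and $f \in \ell^{p'} V$; here the finite support of $\tau$ bypasses any restriction on $f$ other than the standing hypothesis that $\nabla f$ is defined. Finally, the Hölder inequality on the edge set gives
\[
\langle \nabla f \mid \tau \rangle_E \leq \|\nabla f\|_{\ell^p E}\, \|\tau\|_{\ell^{p'} E},
\]
which is the claimed bound.

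There is essentially no obstacle: as the author indicates parenthetically, the proof is just an unpacking of $\nabla^*$ followed by Hölder. The only minor point worth checking explicitly is that the adjunction step goes through in this asymmetric integrability setting (finitely supported $\tau$ paired against a potentially unbounded $f$ whose gradient lies in $\ell^p E$), and this follows immediately from expanding $\nabla^* \tau$ as a finite sum and rearranging.
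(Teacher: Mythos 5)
Your proof is correct and follows exactly the route the paper intends: the author remarks that the proof is ``contained in the statement, being simply the definition of $\nabla^*$'', i.e.\ linearity of the pairing, the defining identity $\nabla^*\tau=\mu-\xi$, the adjunction $\langle f\mid\nabla^*\tau\rangle=\langle\nabla f\mid\tau\rangle$, and H\"older. Your extra check that the adjunction is harmless because $\tau$ is finitely supported is a welcome (if minor) clarification, not a deviation.
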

This lemma will be used along the following lines. When $f$ is harmonic, its value at a vertex $x$ (resp. $y$) are the same as its values on some average $\mu$ (resp. $\xi$) around $x$ (resp. $y$): $f(x) = \langle f \mid \mu \rangle$.
To show $f$ is constant, we will show that for any neighbours $x$ and $y$, there is a sequence of transport $\tau_n$ plans between some averages $\mu$ and $\xi$, so that the norm of $\tau_n$ tends to 0.
This will show that $f(x) - f(y) =0 $ and imply that $f$ is constant.

Here is a slightly more detailed construction of the transport patterns.
First replace the Dirac masses by the a distribution of the random walk (thus reducing the norm).
Inverting the Laplacian (thus increasing the norm) will produce a transport pattern: indeed the divergence of $\nabla \Delta^{-1} (\xi - \mu)$ is just $(\xi -\mu)$.
The inversion of the Laplacian will require estimates on $\kappa_1$, which is why it will be required that the F{\o}lner sets are optimal;
see \S{}\ref{ssisoperel} or \cite{CG} for details. The decrease and increase in norm compensate correctly, to give the desired application of Lemma \ref{testimtp-l}

\subsection{Transport using radial isoperimetry}
\label{sstranreliso}

This section is meant as a warm-up to the proof of the main result, Theorem \ref{tteoprinc}, as well as a motivation to the introduction of the radial isoperimetric inequality \eqref{radiso}. In the next section, the dependance on this (hypothetical) radial isoperimetric inequality will be removed, by showing that this inequality in some sense holds for a random walk, although it might not hold geometrically.


In order to keep the constants legible, only the case $p \in [2,\infty[$ will be taken into account; this is however the only case of intereset, see \S{}\ref{ssredcoh}.

\begin{teo}\label{ttranindiso-t}
Let $G$ be the Cayley graph of a group and assume that for some constants $a,b, K_1$ and $K_2\in ]0,\infty[$, one has $\dfrac{K_1}{(\ln x)^a} \leq \jo{G}(x) \leq \dfrac{K_2}{(\ln x)^b}$.
Assume further that a radial isoperimetric inequality \eqref{radiso} holds.
Then there are no non-constant harmonic functions with gradient in $\ell^p$ for $p\in ]1, \infty[$.
\end{teo}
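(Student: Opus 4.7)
The plan is to apply the transport-pattern framework of Lemma~\ref{testimtp-l}. By the reductions in \S\ref{ssredcoh}, I may assume $G$ has exponential growth and restrict attention to $p \in [2,\infty[$; it then suffices to show $f(x_0)=f(y_0)$ for every pair of neighbours $(x_0,y_0)$ and every harmonic $f$ with $\nabla f \in \ell^p E$. Since harmonicity gives the iterated mean-value identity $\pgen{f \mid P^n \delta_z} = f(z)$, it suffices to produce transport patterns $\tau_N$ from $\xi_N := P^{n_N}\delta_{y_0}$ to $\mu_N := P^{n_N}\delta_{x_0}$ (i.e.\ with $\nabla^*\tau_N = \mu_N - \xi_N$) satisfying $\|\tau_N\|_{\ell^{p'}E}\to 0$; Lemma~\ref{testimtp-l} then forces $f(x_0) = f(y_0)$, giving the conclusion.

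To build $\tau_N$, I would work inside large optimal sets. Using the machinery of \S\ref{ssisoperel} (from~\cite{CG}), the double isoperimetric bound produces infinitely many optimal sets $F_N \ni x_0, y_0$ with $\kappa_1(F_N) \geq K_3 \jo{G}(|F_N|)/\log|F_N|$, whence $\kappa_1(F_N)^{-1} \lesssim (\log|F_N|)^{a+1}$. The radial isoperimetric hypothesis \eqref{radiso} applied to $F_N$ yields $\inrad(F_N) \geq (K\jo{G}(|F_N|))^{-1/k} - 1 \gtrsim (\log|F_N|)^{b/k}$. Set $n_N := \inrad(F_N) - 1$, so that the balls $B(x_0,n_N)$ and $B(y_0,n_N)$ lie inside $F_N$ and therefore so do the supports of $\mu_N$ and $\xi_N$. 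On the induced subgraph $F_N$, $\Delta_{F_N}$ is invertible on mean-zero functions; let $g_N$ be the zero-mean solution of $\Delta_{F_N} g_N = \mu_N - \xi_N$, extend $g_N$ by $0$ outside $F_N$, and set $\tau_N := \nabla g_N$. Since $\tau_N$ vanishes on edges leaving $F_N$, the antisymmetric divergence identity of \S\ref{ssnablaetco} gives $\nabla^* \tau_N = \mu_N - \xi_N$ on the whole graph.

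The decisive estimate chains together: (i)~$\|\nabla g_N\|_{p'} \leq 2 d^{1/p'}\|g_N\|_{p'}$, the standard boundedness of $\nabla$; (ii)~$\|g_N\|_{p'} \leq \lambda_{p'}(F_N)^{-1}\|\mu_N - \xi_N\|_{p'}$ by definition of $\lambda_{p'}$; (iii)~$\lambda_{p'}(F_N)^{-1} \lesssim p\,\kappa_1(F_N)^{-2}$ from items~\textbf{5} and~\textbf{6} of Theorem~\ref{tkplpsu-t} (applicable on the non-regular induced subgraph $F_N$ by Remark~\ref{rreggraph}); (iv)~$\|\mu_N - \xi_N\|_{p'} \leq 2\|P^{n_N}\delta_{x_0}\|_{p'} \leq 2\, P^{2n_N}(x_0,x_0)^{1/p}$, using translation invariance on the Cayley graph together with $L^1$--$L^2$ interpolation (from $\|P^{n_N}\delta_{x_0}\|_1 = 1$ and $\|P^{n_N}\delta_{x_0}\|_2^2 = P^{2n_N}(x_0,x_0)$). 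Combining,
\[
\|\tau_N\|_{\ell^{p'}E} \;\lesssim_{p,d}\; \kappa_1(F_N)^{-2}\, P^{2n_N}(x_0,x_0)^{1/p}.
\]
By Remark~\ref{risopretprob} the isoperimetric lower bound feeds Coulhon--Saloff-Coste and gives $P^{2n}(x_0,x_0) \leq K_4 \exp(-K_5 n^\gamma)$ for some $\gamma>0$, so
\[
\|\tau_N\|_{\ell^{p'}E} \;\lesssim\; (\log|F_N|)^{2a+2}\exp\bigl(-C_p\, (\log|F_N|)^{b\gamma/k}\bigr) \;\longrightarrow\; 0,
\]
as $|F_N|\to\infty$, because $b\gamma/k>0$ and $e^{-L^s}$ beats any polynomial in $L$ as soon as $s>0$.

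The hard part is precisely the interplay between the inradius guarantee and the heat-kernel decay in the last display: without \eqref{radiso} the inradius of optimal sets could be much smaller than polylog (cf.\ the lamplighter construction of \S\ref{sscontrex}), and the resulting $n_N^\gamma$ would be too modest to let the exponential absorb the polylog blow-up of $\kappa_1(F_N)^{-2}$. The radial isoperimetric inequality is exactly the hypothesis that makes this balance succeed, and this is why the next subsection will need to revisit F{\o}lner's arguments to circumvent it.
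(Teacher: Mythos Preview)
Your approach is essentially the paper's: invert the induced Laplacian on well-chosen optimal sets to build the transport pattern, then play the $\kappa_1^{-2}$ blow-up against the heat-kernel decay unlocked by the inradius bound from \eqref{radiso}. There is one technical slip to repair. If you extend $g_N$ by $0$ and take the \emph{full} gradient, then on a boundary edge $(x,y)$ with $x\in F_N$, $y\notin F_N$ one has $\nabla g_N(x,y)=-g_N(x)$, which is generally nonzero; consequently $\nabla^*\tau_N$ fails to vanish outside $F_N$ and picks up extra terms on the inner vertex boundary, so $\nabla^*\tau_N\neq\mu_N-\xi_N$ as written. The fix (which is exactly what the paper does) is to let $\tau_N$ equal the induced gradient $\nabla_{F_N} g_N$ on edges with both endpoints in $F_N$ and $0$ on all other edges; then $\nabla^*\tau_N$ restricted to $F_N$ is $\Delta_{F_N} g_N=\mu_N-\xi_N$ and is identically $0$ off $F_N$, and your norm chain (i)--(iv) goes through unchanged.
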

\begin{proof}
Assume $f$ is a harmonic function and consider the difference of two value at neighbouring vertices $c= f(v)-f(w) = \langle f \mid \delta_v-\delta_w \rangle$.
Without loss of generality, and to alleviate the notations, it will be assumed that $c \geq 0$ (otherwise switch the vertices $v$ and $w$.
Since $f$ is harmonic, $\langle f \mid \delta_v \rangle = \langle f \mid P^k\delta_v \rangle$ for any integer $k \geq 0$.

Consider an optimal sequence of F{\o}lner sets $F_n$ and let $r_n = \inrad(F_n)-1$. Then up to translating the sets $F_n$, one may assume that balls of radius $r_n$ around $x$ or $y$ lie inside the set $F_n$.

By Lemma \ref{testimtp-l}, one then gets $c = \langle f \mid P^{r(F_n)} (\delta_v-\delta_w) \rangle = \langle \nabla f | \tau_n \rangle \leq \|\nabla f\|_{\ell^q} \|\tau_n\|_{\ell^p}$.
Since the gradient of $f$ is bounded (in $\ell^q$-norm) this last term would also tend to 0, showing that $c=0$. Since $v$ and $w$ are arbitrary neighbours, this means that $f$ is constant.

Let $\Delta_n$ be the Laplacian restricted to graph induced on $F_n$, $\nabla_n$ be the gradient restricted to that same finite graph and $g_n = P^{r(F_n)} \delta_w - P^{r(F_n)} \delta_v$.
Note that $g_n$ has zero sum, hence lies in the image of $\Delta_n$.
Let $h_n = \Delta_n^{-1} g$.
Let $\tau_n$ be the function (on the edges) which is identically equal to $\nabla_n h_n$ inside $F_n$.
This is a transport pattern since $\nabla^* \tau_n$ only is non-zero on vertices in $F_n$.
Furthermore, on such vertices, it is equal to $\nabla^* \nabla_n h_n = \Delta_n h_n = g_n$.

It remains to check that the norm of $\tau_n$ tends to $0$.
For this note that $\|g_n\|_{\ell^p} \leq \|P^{r(F_n)} \delta_v\|_{\ell^p} + \|P^{r(F_n)} \delta_w\|_{\ell^p}$.
Furthermore,
\[
\|P^{r(F_n)} \delta_w\|_{\ell^p}^p \leq \|P^{r(F_n)} \delta_w\|_{\ell^\infty}^{p-1} \|P^{r(F_n)} \delta_w\|_{\ell^1} = \|P^{r(F_n)} \delta_w\|_{\ell^\infty}^{p-1}.
\]
Hence $\|g_n\|_{\ell^p} \leq  \|P^{r(F_n)} \delta_w\|_{\ell^\infty}^{1/q}$ (recall $q$ is the H\"older conjugate of $p$).
But $\|P^{r(F_n)} \delta_w\|_{\ell^\infty}$ is the return probability after time $r(F_n)$. So this part of the estimate decreases the norm of the transport pattern

Next
\[
\begin{array}{rll}
\|\tau_n\|_{\ell^pE}
&= \|\nabla h_n\|_{\ell^pE}
&\leq d \|h_n\|_{\ell^pV} \\
&\leq d \|\Delta_n^{-1}\|_{\ell^p \to \ell^p} \|g_n\|_{\ell^pV}
&\leq d \lambda_p(F_n)^{-1} \|g_n\|_{\ell^pV}\\
&\leq 2 q d^3  \kappa_1(F_n)^{-2} \|P^{r(F_n)} \delta_w\|_{\ell^\infty}^{1/q}
\end{array}
\]
where the last inequality follows from Theorem \ref{tkplpsu-t} and the previous estimate on the norm of $g_n$.

By Lemma \ref{tleminrad}, the radial isoperimetric inequality
\eqref{radiso} implies that
there are $c$ and $K_3 \in ]0,\infty[$ so that, for any optimal set $F$, the inner radius $r(F)$ is bounded below: $r(F_n) \geq K_3 (\ln |F_n|)^c$

Heat kernel estimates for such isoperimetric profiles also imply that the probability of return at time $k$ is at most $K_4 \mathrm{exp}(-K_5 k^\gamma)$ (see among many possibilities Pittet \& Saloff-Coste \cite{PSC-survey} or Coulhon \& Saloff-Coste \cite{CSC}). This yields $\|P^{r(F_n)} \delta_w\|_{\ell^\infty}^{1/q} \leq K_6 \mathrm{exp}(-K_7 (\ln |F_n|)^{c\gamma})$

As a consequence of \cite{CG} there are infinitely many optimal sets such that $\kappa_1(F) \geq K_8 / (\ln |F_n|)^t$ for some other constants $K_4$ and $t$. Hence
 $\kappa_1(F_n)^{-2} \leq K_9 (\ln |F_n|)^{2t}$.

Putting these estimates together and putting $x = (\ln |F_n|)^{c\gamma}$, the bound reads
\[
 \|\tau_n\|_{\ell^pE} \leq  K_{10} (\ln |F_n|)^{2t} \mathrm{exp}(-K_7 (\ln |F_n|)^{c\gamma}) \leq K_{10} x^{2t/ c \gamma} \cdot \mathrm{exp}(-K_7 x).
\]
Since $x \to \infty$ as $n \to \infty$, this tends to 0.
\end{proof}

\begin{rmk}
The proof work for a very slightly larger class of groups, since it is only required that $\kappa_1(F_n)^{-2} \rho_{r(F_n)}^{1/q}$ tend to 0 (where $\rho_k$ is the probability of return at time $k$).
\end{rmk}

\subsection{Transport without radial isoperimetry}\label{ssfoelner}

The following lemma, inspired by the work of F{\o}lner \cite{Fol55} (see also Cannon, Floyd \& Parry \cite{CFP} for a more recent account), will enable us to get rid of the hypothesis on radial isoperimetry.
Note that the upcoming lemma holds for generic graph, the hypothesis that all vertices have the same degree can probably be dropped.
\begin{lem}\label{tlemfolner}
Assume the graph is regular of degree $d$.
Let $\eps >0$ and let $F$ be a finite set (of vertices). Let $\un_F$ be the indicator function of $F$ (which takes value 1 on $F$ and 0 on its complement). Let $r$ be such that $\|P^r \un_F\|_{\ell^\infty} \leq 1-\eps$. Then $r \geq \frac{d}{2} \cdot \eps \cdot \frac{|F|}{|\del F|}$
\end{lem}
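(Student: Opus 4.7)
The plan is to sandwich the quantity $\|(I-P^r)\un_F\|_{\ell^1 V}$ between an upper bound that is linear in $r$ and a lower bound that is linear in $\eps|F|$.

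First I would establish the upper bound via a telescoping argument. Write
\[
(I-P^r)\un_F \;=\; (I-P)\sum_{k=0}^{r-1} P^k \un_F.
\]
Because the graph is $d$-regular, $P$ preserves the counting measure and is a contraction on $\ell^1 V$; applying the triangle inequality and pulling the $P^k$ past the $\ell^1$ norm yields $\|(I-P^r)\un_F\|_{\ell^1 V}\leq r\,\|(I-P)\un_F\|_{\ell^1 V}$. Next I would compute $\|(I-P)\un_F\|_{\ell^1 V}$ explicitly: $(I-P)\un_F(x)$ equals $\tfrac{1}{d}|N(x)\cap F^c|$ for $x\in F$, and $-\tfrac{1}{d}|N(x)\cap F|$ for $x\notin F$; summing absolute values counts each boundary edge once from each side, so the total is proportional to $|\del F|/d$.

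Second I would establish the lower bound. By the regularity of the graph, $\sum_x P f(x)=\sum_x f(x)$ for summable $f$, so $(I-P^r)\un_F$ has zero total sum. The hypothesis $\|P^r \un_F\|_{\ell^\infty}\leq 1-\eps$ gives $(I-P^r)\un_F(x)\geq \eps$ for every $x\in F$, while on $F^c$ the function $(I-P^r)\un_F$ is non-positive (since $P^r\un_F\geq 0$ and $\un_F=0$ there). Consequently, the positive part has $\ell^1$-mass at least $\eps|F|$, and because the total sum is zero the negative part has the same mass. Therefore
\[
\|(I-P^r)\un_F\|_{\ell^1 V} \;\geq\; 2\eps|F|.
\]

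Finally, I would combine the two bounds: $2\eps|F| \leq r\,\|(I-P)\un_F\|_{\ell^1 V}$, which after substituting the explicit expression in terms of $|\del F|/d$ rearranges to the stated lower bound $r \geq \tfrac{d\eps}{2}\cdot\tfrac{|F|}{|\del F|}$. The main obstacle, if any, is bookkeeping: making sure the normalisation of $|\del F|$ (directed versus undirected boundary edges, and the convention used for the $\ell^1E$ norm in this paper) is consistent with the conventions in \S\ref{ssnablaetco}. Since the lemma only asks for a one-sided inequality, any slack in the constant introduced by choosing a safe convention is harmless.
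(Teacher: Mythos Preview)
Your argument is correct and in fact yields a constant at least as good as (and, depending on the directed/undirected convention for $|\del F|$, up to a factor $4$ better than) the one stated. The core idea is the same as the paper's: both proofs use the telescoping identity $(I-P^r)\un_F=(I-P)\sum_{k=0}^{r-1}P^k\un_F$ together with the observation that $(I-P)\un_F$ (equivalently $\nabla\un_F$) is concentrated on the boundary, and that $(I-P^r)\un_F\geq\eps$ on $F$.

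The packaging, however, differs. The paper pairs with $\un_F$ and passes to the edge side via $\langle \un_F\mid \nabla^*\nabla f\rangle=\langle \nabla\un_F\mid \nabla f\rangle$, then bounds $\|\nabla f\|_{\ell^\infty(\del F)}\le 2r$ by the duality trick $\nabla f(x,y)=\langle \un_F\mid \sum_{i<r}P^i(\delta_y-\delta_x)\rangle$. You stay entirely on the vertex side, bounding $\|(I-P^r)\un_F\|_{\ell^1V}$ from above by the $\ell^1$-contraction of $P$ and from below by the zero-sum observation (which doubles the obvious lower bound $\eps|F|$). Your route is slightly more elementary in that it avoids the edge pairing altogether and replaces the paper's $\ell^\infty$-gradient estimate by the single line $\|P^k(I-P)\un_F\|_{\ell^1}\le\|(I-P)\un_F\|_{\ell^1}$; this is what buys you the sharper constant. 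The bookkeeping caveat you flag is real but, as you note, harmless: whichever convention one adopts for $|\del F|$, your inequality is at least as strong as the stated one.
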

\begin{proof}
Let $f = \displaystyle \sum_{i=0}^{r-1} P^i \un_F$.
Then $\nabla^* \nabla f = d(1-P)f = d( \un_F - P^r \un_F)$.
By hypothesis, $\nabla^* \nabla f \geq d \eps$.
Hence
\[
 d\eps|F| \leq \langle \un_F \mid \nabla^*\nabla f \rangle = \langle \nabla \un_F \mid \nabla f \rangle \leq |\del F| \cdot \| \nabla f\|_{\ell^\infty( \del F)}
\]
However, if $x \in F$ and $y \notin F$, then
\[
\begin{array}{rl}
\nabla f(x,y)
= f(y)-f(x)
&= \langle f \mid \delta_{y} - \delta_x \rangle \\
&= \displaystyle \sum_{i=0}^{r-1} \langle P^i \un_F \mid \delta_{y} - \delta_x \rangle\\
&= \displaystyle \sum_{i=0}^{r-1} \langle \un_F \mid P^i(\delta_{y} - \delta_x) \rangle\\
&= \displaystyle  \langle \un_F \mid \sum_{i=0}^{r-1} P^i(\delta_{y} - \delta_x) \rangle\\
& \leq \|\un_F\|_{\ell^\infty} \|\displaystyle \sum_{i=0}^{r-1} P^i(\delta_{y} - \delta_x)\|_{\ell^1}\\
& \leq 2r
\end{array}
\]
where the last inequality follows by the triangle inequality (as $\displaystyle \sum_{i=0}^{r-1} P^i(\delta_{y} - \delta_x)$ is a sum of $2r$ probability measures). Hence $d \eps |F| \leq |\del F| 2 r$ which is the bound claimed.
\end{proof}

\begin{rmk}
Note that $r$ is necessarily larger than the inradius of $F$ as soon as $\eps >0$. Furthermore, as $\eps$ tends to 1, $r$ tends to $\infty$. Hence for fixed set $F$, consider the map from $\eps$ to $r_F(\eps)$ the smallest integer which satisfies the hypothesis of Lemma \ref{tlemfolner}

For $\eps>0$, consider also the map $\eps \mapsto 1/\sqrt{\eps}$. This map takes arbitrarily large value for $\eps \to 0$ and value 1 for $\eps = 1$. Consequently the graphs of $\eps \to r_F(\eps)$ and $\eps \to 1/\sqrt{\eps}$ cross each other.

Henceforth, $\eps_0(F)$ will be such that for $\eps < \eps_0$, $r_F(\eps) < 1/\sqrt{\eps}$ and for $\eps > \eps_0$, $r_F(\eps) < 1/\sqrt{\eps}$, and $r_0(F) := r_F(\eps_0)$.
\end{rmk}

\begin{teo}\label{tteoprinc}
Let $G$ be a group of exponential growth.
Assume that the probability of return of the simple random walk at time $k$ is bounded below by $K \mathrm{exp}(K'k^\gamma)$.
Then there are no non-constant harmonic functions with $\ell^p$-gradient in the Cayley graph.
\end{teo}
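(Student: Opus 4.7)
My approach adapts the proof of Theorem \ref{ttranindiso-t}; the role of the radial isoperimetric inequality \eqref{radiso} (which supplied the lower bound $\inrad(F_n) \succcurlyeq (\log|F_n|)^c$ used to place $v,w$ deep inside $F_n$) is now played by Lemma \ref{tlemfolner} combined with Cayley-graph transitivity. By \S{}\ref{ssredcoh} it suffices to treat $p \in [2,\infty[$. Remark \ref{risopretprob} supplies matching polylogarithmic lower and upper bounds on $\jo{G}$ of the form $K_1/(\log n)^a \leq \jo{G}(n) \leq K_2/(\log n)^b$, so \cite{CG} produces an optimal F\o lner sequence $F_n$ with $\kappa_1(F_n) \succcurlyeq \jo{G}(|F_n|)/\log|F_n|$.

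Let $f$ be harmonic with $\nabla f \in \ell^p$, fix neighbours $v, w = vs$ in the Cayley graph, and set $c = f(v) - f(w)$; the goal is $c = 0$. Put $r_n = r_0(F_n)$ and $\eps_n = 1/r_n^2$; the Remark following Lemma \ref{tlemfolner} then gives $r_n \succcurlyeq \jo{G}(|F_n|)^{-1/3}$, together with a vertex $x_n$ such that $P^{r_n}\un_{F_n}(x_n) \geq 1 - \eps_n$. Translating $F_n$ by $vx_n^{-1}$ yields $\tilde F_n$ with $P^{r_n}\un_{\tilde F_n}(v) \geq 1 - \eps_n$, and by left-invariance of the walk $P^{r_n}\un_{s\tilde F_n}(w) \geq 1 - \eps_n$. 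Set $F_n^* := \tilde F_n \cup s\tilde F_n$, so that $P^{r_n}\un_{F_n^*}(v),\, P^{r_n}\un_{F_n^*}(w) \geq 1 - \eps_n$. The F\o lner estimate $|\tilde F_n \triangle s\tilde F_n| \leq 2|\del F_n|$ makes $F_n^*$ a small enlargement of $\tilde F_n$ (satisfying $|F_n^*| \leq |F_n| + |\del F_n|$ and $|\del F_n^*| \leq 2|\del F_n|$), and I expect $\kappa_1(F_n^*)$ to remain comparable to $\kappa_1(F_n)$ up to a constant.

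Writing $g_n = P^{r_n}\delta_v - P^{r_n}\delta_w$ and splitting $g_n = g_n^{\tx{in}} + g_n^{\tx{out}}$ by restriction to $F_n^*$, one has $\|g_n^{\tx{out}}\|_{\ell^1} \leq 2\eps_n$ and $\alpha_n := \sum g_n^{\tx{in}}$ satisfies $|\alpha_n| \leq 2\eps_n$. The main transport pattern $\tau_n^{\tx{main}} := \nabla \Delta_{F_n^*}^{-1}\bigl(g_n^{\tx{in}} - \tfrac{\alpha_n}{|F_n^*|}\un_{F_n^*}\bigr)$, extended by $0$ outside $F_n^*$, mirrors the construction in Theorem \ref{ttranindiso-t}. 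An auxiliary transport $\tau_n^{\tx{aux}}$ routes the remaining mass $g_n^{\tx{out}} + \tfrac{\alpha_n}{|F_n^*|}\un_{F_n^*}$ (zero total sum, $\ell^1$-norm $\leq 4\eps_n$) along paths of length $\leq r_n + 1$; this is possible because $\supp g_n^{\tx{out}} \subset B(v, r_n + 1)$. Following the computation of Theorem \ref{ttranindiso-t} one obtains
\[
\|\tau_n^{\tx{main}}\|_{\ell^{p'}} \lesssim \kappa_1(F_n^*)^{-2}\|P^{r_n}\delta_v\|_{\ell^\infty}^{1/p} \qquad \text{and} \qquad \|\tau_n^{\tx{aux}}\|_{\ell^{p'}} \lesssim \eps_n (r_n+1)^{1/p'} = r_n^{1/p' - 2}.
\]
The heat-kernel upper bound $\|P^{r_n}\delta_v\|_{\ell^\infty} \leq K\exp(-K' r_n^\gamma)$ (which the isoperimetric bounds from Remark \ref{risopretprob} imply; cf.\ the proof of Theorem \ref{ttranindiso-t}) ensures the exponential decay in $r_n$ dominates the polylogarithmic (in $|F_n|$) growth of $\kappa_1(F_n^*)^{-2}$, and both bounds tend to $0$. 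By Lemma \ref{testimtp-l}, $c = \langle \nabla f, \tau_n^{\tx{main}} + \tau_n^{\tx{aux}}\rangle \to 0$, so $c = 0$ and $f$ is constant.

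The main obstacle is justifying the Cheeger comparison $\kappa_1(F_n^*) \gtrsim \kappa_1(F_n)$: a priori the enlargement of $\tilde F_n$ by up to $|\del F_n|$ extra vertices could introduce a bottleneck and collapse the Cheeger constant, so this must be ruled out via $|\del F_n|/|F_n| \to 0$ and the near-optimality of $F_n^*$ as an isoperimetric set (inherited from $F_n$). A secondary technical point is the explicit construction of $\tau_n^{\tx{aux}}$ via a system of short paths from $\supp g_n^{\tx{out}}$ to anchors inside $F_n^*$, verifying the claimed $\ell^{p'}$-norm bound.
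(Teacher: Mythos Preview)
Your overall strategy matches the paper's, and your write-up is in fact cleaner than the paper's somewhat telegraphic proof. The one genuine gap is the self-identified one, and it is caused by an unnecessary detour: the enlargement $F_n^* = \tilde F_n \cup s\tilde F_n$ is not needed, and dropping it dissolves the Cheeger comparison problem entirely. The paper works with the single translate $\tilde F_n$ throughout. The point you are missing is that once $v$ is placed at the good vertex of $\tilde F_n$ (so the mass of $P^{r_n'}\delta_v$ outside $\tilde F_n$ is $<\eps_n$, with $r_n' = r_n-1$), the neighbour $w$ automatically inherits a comparable bound at time $r_n'-1$: conditioning on the first step gives the pointwise inequality $P^{r_n'}\delta_v = \tfrac{1}{d}\sum_{z\sim v} P^{r_n'-1}\delta_z \geq \tfrac{1}{d}P^{r_n'-1}\delta_w$, so the mass of $P^{r_n'-1}\delta_w$ outside $\tilde F_n$ is at most $d\eps_n$. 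Since $f$ is harmonic, one may use different times at $v$ and $w$; take $g_n = P^{r_n'}\delta_v - P^{r_n'-1}\delta_w$, split into inside/outside $\tilde F_n$, and invert $\Delta_{\tilde F_n}$ directly, using $\kappa_1(\tilde F_n)=\kappa_1(F_n)$ with no comparison needed. This is exactly what the paper does (its sentence ``by making $r_0'$ steps of the random walk from $y$ or a neighbour $x$, the mass leaving $F$ is at most $1/r_0(F)^2$'' encodes this one-step trick, up to the harmless factor $d$).

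A secondary issue: your auxiliary correction $\tfrac{\alpha_n}{|F_n^*|}\un_{F_n^*}$ is spread over all of $F_n^*$, whose diameter is polylogarithmic in $|F_n|$ and in general much larger than $r_n$, so $\tau_n^{\tx{aux}}$ cannot be routed along paths of length $\leq r_n+1$ as claimed. Replace the uniform correction by a single Dirac $\alpha_n\delta_v$ (this does not affect the $\ell^{p'}$-estimate on $\tau_n^{\tx{main}}$, since $|\alpha_n|\leq 2\eps_n$); then the remainder $g_n^{\tx{out}}+\alpha_n\delta_v$ has support in $B(v,r_n+1)$, zero sum, and $\ell^1$-norm $\leq (d+3)\eps_n$, so transporting it along paths of length $O(r_n)$ gives $\|\tau_n^{\tx{aux}}\|_{\ell^{p'}}\lesssim \eps_n r_n = r_n^{-1}\to 0$. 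The paper handles this imbalance the same way (``compensate by taking also some mass inside $F$'').
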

\begin{proof}
First by Remark \ref{risopretprob} the hypothesis imply that the isoperimetric ratio is bounded between two power of logarithms as in Theorem \ref{ttranindiso-t}.

The strategy will now be the same as in Theorem \ref{ttranindiso-t}, except that the hypothesis of the inradius can be discarded since now $r_0(F)$ will be used instead.
First note that, for some $y \in F$,
$1-\eps_0 \geq P^{r_0}\un_F(y) = \langle P^{r_0} \un_F \mid \delta_y \rangle = \langle \un_F \mid P^{r_0} \delta_y \rangle$.
This implies that after making $r_0$ steps from $y$, at most $1/r_0(F)^2$ of the mass left the set $F$. By minimality of $\eps \mapsto r_F(\eps)$, making one step less means less mass left $F$. Hence by taking $r'_0 = r_0(F)-1$, by making $r'_0$ steps of the random walk from $y$ or a neighbour $x$, the mass leaving $F$ is at most $1/r_0(F)^2$.

There are now two parts to consider when making a transport pattern for $P^{r_0(F)}(\delta_y-\delta_x)$.
The mass which has left $F$ can be transported by some paths of length at most $2r_0(F)+1$. This transport plan for the mass which left $F$ has $\ell^1$-norm $\frac{2r_0(F)+1}{r_0(F)^2}$ (and $\ell^\infty$-norm $\tfrac{1}{r_0(F)^2}$), hence it tends to 0 in $\ell^p$ (for any $p$). Note that is could happen that the mass which left $F$ is not balanced (some of ot has $-$ sign). But then one can compensate by taking also some mass inside $F$.

The mass inside the set $F$ can be transported as in Theorem \ref{ttranindiso-t}, by Lemma \ref{tlemfolner} and using $\eps_0(F)= 1/r_0(F)^2$, $r_0(F)^3 \geq \frac{d}{2} \cdot \frac{|F|}{|\del F|}$ (which replaces the bound from radial isoperimetry).
\end{proof}


\begin{rmk}\label{remredu}
Assume a group $G$ satisfies the estimates $e^{-n^\alpha} \leq \rho_n \leq e^{-n^\beta}$. Then by Theorem \ref{tteoprinc} and Remark \ref{risopretprob},
it has no non-constant harmonic function with gradient in $\ell^p$ for any $p \in ]1,\infty[$ (\ie the reduced $\ell^p$-cohomology is trivial).
Let $Q$ be a quotient of $G$. If $Q$ has subexponential growth, then there are no such harmonic function by \cite{Go} (see also \cite{Go-frontier}). If $Q$ has exponential growth, then note that there the upper bound on $\rho_n$ holds (with $\beta = 1/3$) while taking quotients only increase the return probability, hence the lower bound also holds with the same $\alpha$.
Hence the conclusion of Theorem \ref{tteoprinc} extends to any quotient of such groups.
\end{rmk}

There are also groups where it seems very unlikely that Theorem \ref{ttranindiso-t} applies.
A possible candidate could be as simple as $A \wr (A \wr \zz)$ where $A$ is some finite group.
There are two reasons for this.
First, \cite{CG} show that (even for optimal sets) $\kappa_1(F) \leq (\ln|F|)^c$ (which is much worse than the isoperimetric ratio function).
Second, the inner radius for the usual F{\o}lner sets is probably fairly small, \eg one would expect something like $r(F) \approx \ln \ln |F|$.

This indicates that the methods probably breaks for groups whose isoperimetry ratio function decays more slowly than a power of $\ln$, or equivalently for groups whose probability of return is closer to linear; \eg of the type $K_1 \mathrm{exp}\big(-K_2 n/(\ln n)^a\big)$.
Note however that for the group $A \wr (A \wr \zz)$ it is quite simple to show that there are no non-constant harmonic functions with gradient in $\ell^p$, see \cite{Go-cras}.

%

\end{document}